\def\R{{\mathbb R}}
\def\Z{{\mathbb Z}}
\def\N{{\mathbb N}}
\def\T{{\mathbb T}}
\def\squareforqed{\hbox{\rlap{$\sqcap$}$\sqcup$}}
\def\qed{\ifmmode\squareforqed\else{\unskip\nobreak\hfil
\penalty50\hskip1em\null\nobreak\hfil\squareforqed
\parfillskip=0pt\finalhyphendemerits=0\endgraf}\fi}
\newtheorem{thm}{Theorem}[section]
\newtheorem{cor}[thm]{Corollary}
\newtheorem{lem}[thm]{Lemma}
\newtheorem{prop}[thm]{Proposition}
\newtheorem{rem}[thm]{Remark}
\begin{document}
\title{Unitary equivalence of lowest dimensional reproducing formulae of  type $\mathcal{E}_2 \subset Sp(2,\R)$}
\author{R. Boyer}
\address{\textnormal{Department of Mathematics,
Drexel University, 3250 Chestnut Street, Philadelphia, PA 19104, USA} \newline
\textnormal{e-mail: rboyer@math.drexel.edu}}
\author{K. Nowak}
\address{\textnormal{Department of Computer Science,
Drexel University, 3141 Chestnut Street, Philadelphia, PA 19104, USA} \newline
\textnormal{e-mail: kn33@drexel.edu}}
\author{M. Pap}
\address{\textnormal{Faculty of Sciences, University of P\'ecs, 7634 P\'ecs, Ifj\'us\'ag \'ut 6, HUNGARY} \newline
\textnormal{e-mail: papm@gamma.ttk.pte.hu}}

\begin{abstract}
All two-dimensional reproducing formulae, i.e. of $L^2(\R^2)$, resulting out of restrictions of the projective metaplectic representation to connected Lie subgroups of $Sp(2,\R)$  and of type $\mathcal{E}_2$, were listed and classified up to conjugation within $Sp(2,\R)$  in \cite{DM&Co2}, \cite{DM&Co3}. A full classification, up to conjugation within $\R^2 \rtimes Sp(1,\R)$, of one-dimensional reproducing formulae, i.e. of $L^2(\R)$, resulting out of restrictions of the extended projective metaplectic representation to connected Lie subgroups of $\R^2 \rtimes Sp(1,\R)$ was obtained in \cite{DMNo1}, \cite{DMNo2}. In dimension one, there are no reproducing formulae with one-dimensional parametrizations, yet in dimension two, there are reproducing formulae with two-dimensional parametrizations.  Two-dimensional reproducing subgroups of $Sp(2,\R)$  of type $\mathcal{E}_2$ are a novelty. They exhibit completely new phase space phenomena. We show, that they are all unitarily equivalent via natural choices of coordinate systems, and we derive the consequences of this equivalence.
\end{abstract}

\maketitle
\markboth{Unitary equivalence}{R. Boyer, K. Nowak, M. Pap}

\section{Introduction}
Let $(\mathfrak{P},\nu)$ be a measure space, and $\left\{ \phi_\mathfrak{p}\right\}_{\mathfrak{p}\in \mathfrak{P}}$ a measurable field with values in a Hilbert space $\mathcal{H}$ (see e.g. Section 5.3 of \cite{AAG}). We say that $\left\{\phi_\mathfrak{p}\right\}_{\mathfrak{p}\in \mathfrak{P}}$ is a {\it  reproducing system} in $\mathcal{H}$, with the {\it parameter measure} $\nu$, if for every $f\in \mathcal{H}$
\begin{equation}
f=\int_\mathfrak{P}\langle f,\phi_\mathfrak{p} \rangle \phi_\mathfrak{p}\, d\nu(\mathfrak{p}),
\label{reproducing_system_definition}
\end{equation}
where the convergence in (\ref{reproducing_system_definition}) is understood in the weak sense. Via polarization formula
 (\ref{reproducing_system_definition}) is equivalent to 
\begin{equation}
||f||^2=\int_\mathfrak{P}\left| \langle f,\phi_\mathfrak{p} \rangle \right|^2 \, d\nu(\mathfrak{p}),
\label{reproducing_system_polarization}
\end{equation}
valid for all $f\in \mathcal{H}$. Form (\ref{reproducing_system_polarization}) of (\ref{reproducing_system_definition}) is more convenient than  (\ref{reproducing_system_definition}) in formal arguments and we will use it frequently. In case $\nu$ is the counting measure the system $\left\{\phi_\mathfrak{p}\right\}_{\mathfrak{p}\in \mathfrak{P}}$ is called a {\it Parseval frame}. Formulae of the form (\ref{reproducing_system_definition}) are called {\it reproducing formulae}.

The group $Sp(d,\R)$ consist of $2d\times 2d$ invertible matrices, with real coefficients, preserving the symplectic form. The {\it extended projective metaplectic representation} $\mu_e$ of $\R^{2d}\rtimes Sp(d,\R)$ assigns to an affine transformation $g\in \R^{2d}\rtimes Sp(d,\R)$ of the phase space $\R^{2d}=\left\{ (x,\xi)|\,x,\xi \in \R^d \right\}$ the corresponding unitary operator $\mu_e(g)$ acting on $L^2(\R^d)$. The definition of  $\mu_e$, we provide next, is based on the Wigner distribution. There are many alternative models for defining the extended projective metaplectic representation. The choice of the model depends on specific targets. In the current context we want to stress the phase space geometry  phenomena captured by the Wigner distribution. Operator $\mu_e(g)$ translates the affine action of $g$ performed on the Wigner distribution $W_\phi$, $\phi \in L^2(\R^d)$, to the level of $\phi$, i.e. to any $\phi \in L^2(\R^d)$ it assigns $\mu_e(g)\,\phi$ via the formula
\begin{equation}
W_{\mu_e(g)\,\phi}(x,\xi)=W_{\phi}\left(g^{-1}\cdot (x,\xi)\right),
\label{affine_invariance_of_Wigner}
\end{equation}
where $W_\phi (x,\xi)=\int_{\R^d}e^{-2\pi i \langle \xi,y \rangle}
\phi(x+y/2)\overline{\phi(x-y/2)}\,dy$. Function $\mu_e(g)\,\phi$ of 
formula (\ref{affine_invariance_of_Wigner}) is defined up to a phase factor, since the Wigner distribution identifies square integrable functions up to phase factors. As the outcome of (\ref{affine_invariance_of_Wigner}), we obtain the {\it  extended projective metaplectic representation} $\mu_e$. The name extended comes from the fact that we add phase space translations represented as $\R^{2d}$ to the linear action of $Sp(d,\R)$. The name metaplectic is usually used for the (non-projective, i.e. exact, as far as the phase factors are concerned) representation of the double cover of $Sp(d,\R)$, satisfying (\ref{affine_invariance_of_Wigner}). For a comprehensive treatment of the metaplectic representation, from the point of view of analysis in phase space, we refer the reader to books by Folland \cite{Fol1}, Gr\"ochenig \cite{Gro}, De Gosson \cite{DeGo}, and the survey article by De Mari-De Vito \cite{DM&Co4}.

The classical interpretation of the Wigner distribution identifies it as the best possible surrogate of the non-existent joint probability distribution of position and momentum. It is a well established expectation of the mathematical physics community that reproducing formulae should be in one to one correspondence with phase space coverings obtained via the Wigner distribution. It is therefore of primary importance to identify and investigate all reproducing formulae for $L^2(\R^d)$ constructed out of restrictions of the extended metaplectic representation to connected Lie subgroups of $\R^{2d}\rtimes Sp(d,\R)$. A subgroup of $\R^{2d}\rtimes Sp(d,\R)$  is called {\it reproducing}, if it is possible to construct a reproducing formula out of its action on $L^2(\R^d)$, just
by properly choosing the generating function. All one-dimensional, i.e. with $d=1$, reproducing formulae of this type were classified up to a conjugation by an affine transformation of the time-frequency plane in \cite{DMNo1}, \cite{DMNo2}. As a particular consequence, the classification demonstrated, that in one dimension none of the one-dimensional connected Lie subgroups  of $\R^{2}\rtimes Sp(1,\R)$ is reproducing. The situation is different in two dimensions, i.e. for $d=2$. It is possible to construct reproducing formulae for $L^2(\R^2)$ out of restrictions of the extended projective metaplectic representation to two-dimensional connected Lie subgroups of $\R^4\rtimes Sp(2,\R)$.
All reproducing formulae, constructed out of restrictions of the projective metaplectic representation of $Sp(2,\R)$ to connected Lie subgroups of type $\mathcal{E}_2$, were classified up to a conjugation by a linear transformation of the phase space $\R^4$ in \cite{DM&Co2} and \cite{DM&Co3}. It came as a surprise that in two dimensions also two-dimensional reproducing subgroups are possible. The other possible dimensions of reproducing subgroups are three and four. All one-dimensional reproducing formulae classified in \cite{DMNo1}, \cite{DMNo2} may be interpreted geometrically as corresponding to coverings of the time-frequency plane constructed via the action of the reproducing subgroup applied to a compact set. The same phase space geometric interpretation is valid for all other standard reproducing formulae, but not for the two-dimensional reproducing Lie subgroups of $Sp(2,\R)$ of type $\mathcal{E}_2$. In these special cases the set providing the phase space covering via the action of the reproducing subgroup must be non-compact. We refer the reader to \cite{NoPa} for detailed descriptions of phase space coverings corresponding to the two-dimensional lifts of Shannon wavelets, the systems adaptable via unitary maps to all of the representations treated in the current paper.  

Let $\mathcal{Q}$ be the standard maximal parabolic subgroup of $Sp(d,\R)$ consisting of matrices of the form
\begin{equation}
\left[
\begin{array}{cc}
h & 0 \\
\sigma h & { }^th^{-1}
\end{array}
\right],
\label{standard_maximal_parabolic}
\end{equation}
where $h\in GL(d,\R)$, $\sigma \in \text{Sym}(d,\R)$. Let us recall that $GL(d,\R)$ consists of all $d\times d$ 
invertible matrices, and  $\text{Sym}(d,\R)$ of all $d\times d$ symmetric  matrices. In both cases the coefficients are real. Any 
$g\in \mathcal{Q}$ may be factored out as
\begin{equation}
g=
\left[
\begin{array}{cc}
1 & 0 \\
\sigma & 1
\end{array}
\right]
\left[
\begin{array}{cc}
m & 0 \\
0 & { }^tm^{-1}
\end{array}
\right]
\left[
\begin{array}{cc}
a & 0 \\
0 & a^{-1}
\end{array}
\right],
\label{factorization_of_maximal_parabolic}
\end{equation}
where $a\in \R$, $a\ne 0$, and $m\in SL(d,\R)$, i.e. $m\in GL(d,\R)$, $\det m =1$. Formulas 
(\ref{standard_maximal_parabolic}), (\ref{factorization_of_maximal_parabolic}) show that 
$$
\mathcal{Q}=\text{Sym}(d,\R)\rtimes GL(d,\R),
$$
with the group law
$$
\left( \sigma, h \right)\cdot \left( \sigma ', h' \right)=
\left( \sigma+{ }^th^{-1}\sigma ' h^{-1}, hh'\right).
$$
A subgroup of  $\mathcal{Q}$  is called type $\mathcal{E}_d$ if it is of
of the form $\Sigma \rtimes H$ , where $0\ne \Sigma \subset 
\text{Sym}(d,\R)$ is a vector space, and $1\ne H\subset GL(d,\R)$ a connected Lie subgroup. For a group of type $\mathcal{E}_d$ represented as $\Sigma \rtimes H$, we have a very explicit form of the {\it projective metaplectic representation}, namely
\begin{equation}
\mu_e(\sigma,h)f(x)=\left| \det h \right|^{-\frac 12}e^{-2\pi i \Phi(x)\sigma}f\left( h^{-1}x \right),
\label{sigma_d_symbol}
\end{equation}
where for $x\in \R^d$, functional $\Phi(x)\in \Sigma^*$ is defined as $\Phi(x)\sigma=-\frac 12 \sigma x\cdot x$. Function $\Phi:\R^d \rightarrow \Sigma ^*$ is called the {\it symbol} associated to $\Sigma$. 

Table 1 presents a complete list of non-conjugate, two-dimensional reproducing groups of type $\mathcal{E}_2$, obtained in \cite{DM&Co2}, \cite{DM&Co3}. Conjugacy is defined via inner automorphisms of $Sp(2,\R)$.
\begin{equation*}
\begin{matrix}
\text{Subgroup Type}  & \Sigma  &  H  &  \Phi  \\ 
\text{} & u\in \R & t\in \R & \text{} \\
\text{ } & \text{ } & \text{ } & \text{ } \\
\text{I},\alpha \in [-1,0) 
& 
\left[
\begin{array}{cc}
u & 0 \\ 
0 & 0
\end{array}
\right]
&
\left[
\begin{array}{cc}
e^{\alpha t} & 0 \\
0 & e^{(\alpha +1)t}
\end{array}
\right]
& -\frac 12 x_1^2 \\
\text{ } & \text{ } & \text{ } & \text{ } \\
\text{II} 
&
\left[
\begin{array}{cc}
u & 0 \\
0 & 0
\end{array}
\right]
&  e^t
\left[
\begin{array}{cc}
1 & 0 \\
t & 1
\end{array}
\right]
& -\frac 12 x_1^2 \\
\text{ } & \text{ } & \text{ } & \text{ } \\
\text{III},\alpha \in [0,\infty) 
&
\left[
\begin{array}{cc}
u & 0 \\
0 & u
\end{array}
\right]
&  e^t
\left[
\begin{array}{cc}
\cos \alpha t & \sin \alpha t \\
-\sin \alpha t & \cos \alpha t
\end{array}
\right]
& -\frac 12 ( x_1^2 + x_2^2) \\
\text{ } & \text{ } & \text{ } & \text{ } \\
\text{IV},\alpha \in [0,\infty) 
&
\left[
\begin{array}{cc}
u & 0 \\
0 & -u
\end{array}
\right]
&  e^t
\left[
\begin{array}{cc}
\cosh \alpha t & \sinh \alpha t \\
\sinh \alpha t & \cosh \alpha t
\end{array}
\right]
& -\frac 12 ( x_1^2 - x_2^2) 
\end{matrix}
\end{equation*}
$$
\text{Table 1}
$$

In Table 1 rows describe the choices of representatives of non-conjugate conjugacy classes of subgroups. Columns identify parameters of direct products and the projective metaplectic representation. 

The main result of the current paper is summarized in Table 2. For each of the cases listed in Table 1 we identify a coordinate system providing unitary equivalence with case $\text{I },\alpha = -1$.
\begin{equation*}
\begin{matrix}
\text{Subgroup Type}  & U  & \text{ New Coordinates}  
&  \text{Original }\mathcal{H}   &  \text{Resulting }\mathcal{H}\\ 
\text{ } & \text{ } & \text{ } & \text{ }  & \text{ }\\
\text{I },\alpha \in [-1,0) 
& 
y_1^{\frac{\alpha +1}{2\alpha}}f_c(y_1,y_2)
&
\begin{cases}
y_1 =x_1 \\
y_2 =x_1^{-\frac{\alpha +1}{\alpha}}x_2
\end{cases}
& L^2(\R_+ \times \R)  &  L^2(\R_+ \times \R) \\
\text{ } & \text{ } & \text{ } & \text{ } & \text{ } \\
\text{II}
&
y_1^{\frac 12}f_c(y_1,y_2)
& 
\begin{cases}
y_1 =x_1 \\
y_2 ={\frac{x_2 -x_1\log x_1}{x_1}}
\end{cases}
&  L^2(\R_+ \times \R) &  L^2(\R_+ \times \R) \\
\text{ } & \text{ } & \text{ } & \text{ }  & \text{ }\\
\text{III},\alpha \in [0,\infty) 
&
{\left( r'\right)}^{\frac 12}f_c(r',\theta')
& 
\begin{cases}
r'=r \\
\theta' =\theta -\alpha \log r
\end{cases}
&  L^2(\R^2)  &  L^2(\R_+ \times \T)\\
\text{ } & \text{ } & \text{ } & \text{ }  & \text{ }\\
\text{IV},\alpha \in [0,\infty) 
&
{\left( r'\right)}^{\frac 12}f_c(r',\theta')
& 
\begin{cases}
r'=r \\
\theta' =\theta -\alpha \log r
\end{cases}
&  L^2(\R_+ \times \R)  &  L^2(\R_+ \times \R)
\end{matrix}
\end{equation*}
$$
\text{Table 2}
$$

In Table 2 column $U$ describes unitary maps, and $f_c$ expresses $f$ in new coordinates. In cases III, IV $r, \theta$
represent standard polar and hyperbolic polar coordinates, respectively.

In Section 2 we introduce representations $\mu^{(l)}$, $\mu^{(q)}$, with $\mu^{(l)}$ allowing direct adaptations of Shannon lifts results and  constructions  of \cite{NoPa} to the current setup, where we make a transition to the Fourier transform domain, and we restrict to positive frequencies, and with $\mu^{(q)}$ allowing their further transfer to the context of $Sp(2,\R)$, in the case $\text{I}, \alpha = -1$ of Table 1. The case $\text{I}, \alpha = -1$ allows still  further transfers to all the $\mu^{\mathcal J}$ cases, described in Table 1. In Section 3 we introduce and study the unitary maps of Table 2 allowing reductions of all currently known reproducing formulae of $L^2(\R^2)$ with two dimensional parameterizations 
 of the cases of $\mu^{\mathcal J}$ to the case $\text{I}, \alpha = -1$.

The book by  F\"uhr \cite{Fuh} approaches constructions of wavelet type expansions via powerful tools of representation theory. The existence of a generating function for $\mu^{(l)}$ is guaranteed by the general theory of wavelet transforms developed there. 
It is enough to observe that $\mu^{(l)}$ is  unitarily equivalent to
a countably infinite multiple of the standard, square-integrable representation $\sigma$ of the $ax+b$
group acting on $L^2(\R_+)$, i.e. $\mu^{(l)} = \oplus_{n\in\N}\sigma_n$.  The existence of a generating function or an admissible vector (as it is called in \cite{Fuh}) for $\mu^{(l)}$ follows from Corollary 4.27 of \cite{Fuh}, since each $\sigma_n$ has an admissible  vector and the  $ax+b$ group is type I and nonunimodular.

The paper by Aronszajn discusses the origins of the theory of reproducing kernels. The book by Ali-Antoine-Gazeau \cite{AAG} presents both the current stage of development of the theory of reproducing formulae, as well as the background results. Our current results follow the approaches of  De Mari-Nowak \cite{DMNo1}, \cite{DMNo2}, Cordero-De Mari-Nowak-Tabacco \cite{CDMNT1}, \cite{CDMNT2}, \cite{CDMNT3}, De Mari-De Vito and collaborators \cite{DM&Co1}, \cite{DM&Co2}, \cite{DM&Co3}, \cite{DM&Co4}, Cordero-Tabacco \cite{CoTa}.
The books by Daubechies \cite{Dau}, Gr\"ochenig \cite{Gro}, Folland \cite{Fol1}, Wojtaszczyk \cite{Woj} are comprehensive references on phase space analysis and wavelets. We refer the reader to books by {\L}ojasiewicz \cite{Loj},  Rudin \cite{Rud}, and Folland \cite{Fol2} for the background results we use in our proofs.

\section{ Expansions via one-dimensional affine and wavelet lattice actions}

We define two reference representations $\mu^{(l)}$, $\mu^{(q)}$. Letters $l$, $q$ stand for linear and quadratic oscillations respectively. Let $(Y,\kappa)$ be a measure space equipped with a non-negative,  complete, $\sigma$-finite measure $\kappa$. Let ${\mathcal{H}}$ denote the space $L^2(\R_+ \times Y, dx \times d\kappa(y))$, where $dx$ is the Lebesgue measure on $\R_+$, and $dx \times d\kappa(y)$ is the completion of the product measure defined on $\R_+ \times Y$. For $f\in {\mathcal{H}}$ we define
\begin{align}
\mu^{(l)}_{(u,s)}f(\xi,y)&=s^{1/2}f(s\xi,y)e^{2\pi i u \xi}, s>0, u \in \R,
\label{mu_l}\\
\mu^{(q)}_{(v,t)}f(r,y)&=t^{1/2}f(tr,y)e^{\pi i v r^2}, t>0, v\in \R.
\label{mu_q}
\end{align}
Maps $(u,s)\mapsto \mu^{(l)}_{(u,s)}$, $(v,t)\mapsto \mu^{(q)}_{(v,t)}$ are unitary representations on ${\mathcal{H}}$ of groups $G^{(l)}=\left\{ (u,s)\,|\,u \in \R, s>0 \right\}$, with the composition rule $(u ',s')\circ(u,s)=\left( s'u + u',s's\right)$ and the left Haar measure $\frac{du\,ds}{s^2}$,  and $G^{(q)}=\left\{ (v,t)\,|\,v \in \R, t>0 \right\}$, with the composition rule $(v ',t')\circ(v,t)=\left( (t')^2 v + v',t't \right)$ and the left Haar measure $\frac{dv\,dt}{t^3}$. Representation (\ref{mu_l}) is the standard one-dimensional wavelet action applied to the first coordinate, represented in the frequency domain, and restricted to positive frequencies. Representation (\ref{mu_q}) is an adaptation of (\ref{mu_l}) to the context of the projective metaplectic representation of $Sp(2,\R)$, where only quadratic oscillations occur. The lattice $\Lambda ^{(l)}=\left\{(2^km,2^{k})\right\}_{k,m \in \Z}$ properly discretizes $\mu^{(l)}$, $G^{(l)}$, and $\Lambda ^{(q)}=\left\{(2^{k+1}m,2^{k/2})\right\}_{k,m \in \Z}$ is its appropriate adaptation to $\mu^{(q)}$, $G^{(q)}$.
\begin{prop}\label{l_q_equivalence}
Let $U:{\mathcal{H}}\rightarrow {\mathcal{H}}$ be defined as $ Uf(r,y)=(2r)^{1/2}f(r^2,y)$.

\noindent
(i). Operator $U$ is unitary, and it intertwines representations $\mu^{(l)}$ and $\mu^{(q)}$, 
\begin{equation}
\mu^{(l)}_{(u,s)}=U^*\mu^{(q)}_{\left(2u,s^{1/2}\right)}U.
\label{l_q_equivalence_1}
\end{equation}
(ii). Let $\psi \in {\mathcal{H}}$. The system $\left\{\mu^{(l)}_{(u,s)}\psi \right \}_{\substack{u\in \R \\ s>0}}\subset {\mathcal{H}}$,
with the parameter measure $\frac{du\,ds}{s^2}$, is reproducing if and only if the system $\left\{\mu^{(q)}_{(v,t)}U\psi \right \}_{\substack{v\in \R \\ t>0}}\subset {\mathcal{H}}$,
with the parameter measure $\frac{dv\,dt}{t^3}$, is reproducing.

\noindent
(iii).  Let $\psi \in {\mathcal{H}}$. The system $\left\{\mu^{(l)}_\lambda\psi \right \}_{\lambda \in \Lambda^{(l)}}\subset {\mathcal{H}}$ is reproducing if and only if the system $\left\{\mu^{(q)}_\lambda U\psi \right \}_{\lambda \in \Lambda^{(q)}}\subset {\mathcal{H}}$ is reproducing. In both cases the parameter measure is the counting measure.
\end{prop}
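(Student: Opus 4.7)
The whole statement is really one assertion (an intertwining) dressed up three ways: as an operator identity, as an equivalence of continuous reproducing formulae, and as an equivalence of lattice-indexed ones. My plan is to establish the intertwining (i) by a direct check, then read off (ii) and (iii) from it via changes of variable that are designed precisely so that the Haar measure and the lattice of $G^{(l)}$ match those of $G^{(q)}$.

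For (i), I first verify that $U$ is unitary by applying the substitution $\xi = r^{2}$, $d\xi = 2r\,dr$ to $\int_{\R_+}|Uf(r,y)|^{2}\,dr$; the factor $(2r)^{1/2}$ is built into $U$ exactly to absorb the Jacobian. The adjoint is then $U^{*}g(\xi,y)=(2\sqrt{\xi})^{-1/2}g(\sqrt{\xi},y)$. For the intertwining identity it is cleaner to prove the equivalent form $U\mu^{(l)}_{(u,s)}=\mu^{(q)}_{(2u,s^{1/2})}U$: I compute both sides of this at a point $(r,y)$, and in each case obtain $2^{1/2}s^{1/2}r^{1/2}f(sr^{2},y)e^{2\pi i u r^{2}}$, where the substitution $\pi i v r^{2}\mapsto 2\pi i u r^{2}$ forces $v=2u$ and the dilation $\xi\mapsto s\xi$ at $\xi=r^{2}$ forces $t=s^{1/2}$. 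This is really the only calculation; the rest of the proof is bookkeeping.

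For (ii), I use (i) and the unitarity of $U$ to write, for every $g\in\mathcal{H}$,
\begin{equation*}
\langle g,\mu^{(q)}_{(v,t)}U\psi\rangle=\langle U^{*}g,\mu^{(l)}_{(v/2,t^{2})}\psi\rangle.
\end{equation*}
The substitution $u=v/2$, $s=t^{2}$ has Jacobian $s^{-1/2}=t^{-1}$, so $dv\,dt/t^{3}=du\,ds/s^{2}$, i.e.\ it sends the left Haar measure of $G^{(q)}$ to that of $G^{(l)}$. Consequently
\begin{equation*}
\int_{G^{(q)}}|\langle g,\mu^{(q)}_{(v,t)}U\psi\rangle|^{2}\frac{dv\,dt}{t^{3}}
=\int_{G^{(l)}}|\langle U^{*}g,\mu^{(l)}_{(u,s)}\psi\rangle|^{2}\frac{du\,ds}{s^{2}},
\end{equation*}
and since $\|U^{*}g\|=\|g\|$, the right-hand side equals $\|g\|^{2}$ for all $g$ if and only if $\int|\langle h,\mu^{(l)}_{(u,s)}\psi\rangle|^{2}\,du\,ds/s^{2}=\|h\|^{2}$ for all $h\in\mathcal{H}$ (taking $h=U^{*}g$ and noting that $U^{*}$ is a bijection of $\mathcal{H}$). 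This gives the equivalence in both directions.

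For (iii), the same formula applies, only with the parameter measure replaced by counting measure. What must be checked is that the map $(u,s)\mapsto(2u,s^{1/2})$ sends $\Lambda^{(l)}$ bijectively onto $\Lambda^{(q)}$: evaluating at $(u,s)=(2^{k}m,2^{k})$ yields $(2^{k+1}m,2^{k/2})$, and this map is manifestly invertible on $\Z\times\Z$. Summing $|\langle g,\mu^{(q)}_{\lambda}U\psi\rangle|^{2}$ over $\lambda\in\Lambda^{(q)}$ is therefore the same as summing $|\langle U^{*}g,\mu^{(l)}_{\lambda'}\psi\rangle|^{2}$ over $\lambda'\in\Lambda^{(l)}$, and again unitarity of $U$ closes the argument. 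The only place where a genuine obstacle could arise is the matching of the continuous Jacobian and the discrete lattice shift simultaneously by a single reparametrisation $(u,s)\mapsto(2u,s^{1/2})$; the definitions of $\Lambda^{(l)}$ and $\Lambda^{(q)}$ in the paper are chosen so that this compatibility is automatic, which is the point of the proposition.
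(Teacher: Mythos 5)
Your proposal is correct and follows essentially the same route as the paper: establish the intertwining by the substitution $\xi=r^2$, $v=2u$, $t=s^{1/2}$, then transfer the reproducing property using the fact that this reparametrisation carries $\frac{du\,ds}{s^2}$ to $\frac{dv\,dt}{t^3}$ and $\Lambda^{(l)}$ onto $\Lambda^{(q)}$. The only cosmetic difference is that you verify the operator identity pointwise while the paper verifies it weakly through inner products, and you phrase the conclusion of (ii) via the norm identity for all $g$ where the paper invokes polarization; these are interchangeable.
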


\begin{proof} We start with the proof of (i). A direct calculation shows that $U^{-1}f(r,y)=\left(2r^{1/2}\right)^{-1/2}
f(r^{1/2},y)$. Simple changes of variables verify that both $U$ and $U^{-1}$ preserve the norm of $ {\mathcal{H}}$. For 
$f,g \in {\mathcal{H}}$ we have
\begin{align}
\left\langle f, \mu^{(l)}_{(u,s)}g\right\rangle &= \int_{\R_+\times Y}f(\xi,y)s^{1/2}\overline{g}(s\xi,y)e^{-2\pi i u\xi}d\xi \,d\kappa(y)
\nonumber
\\ &=\int_{\R_+ \times Y}f(r^2,y)s^{1/2}\overline{g}(sr^2,y)e^{-2\pi i u r^2}2rdr\,d\kappa(y)
\nonumber
\\ &=\int_{\R_+ \times Y}\left( 2r\right)^{1/2}f(r^2,y)t^{1/2}\left( 2tr\right)^{1/2}\overline{g}((tr)^2,y)e^{-\pi i v r^2}dr\,d\kappa(y)
\nonumber
\\ &=\left\langle Uf, \mu^{(q)}_{(v,t)}Ug\right\rangle
=\left\langle f, U^{*}\mu^{(q)}_{(v,t)}Ug\right\rangle,
\nonumber
\end{align}
where we have substituted $\xi$ by $r^2$, $s$ by $t^2$, and $2u$ by $v$. 
From the formula above we obtain 
$$
\mu^{(l)}_{(u,s)}= U^{*}\mu^{(q)}_{(v,t)}U,
$$
and this finishes the proof of (i). We apply (i) in order to prove (ii). Substitutions $\frac{v}{2}$ for $u$, and $t^2$ for $s$ give
$$
\int_{\R_+ \times \R}\left| \left\langle f, \mu^{(l)}_{(u,s)}\psi\right\rangle\right|^2 \frac{du\,ds}{s^2}=
\int_{\R_+ \times \R}\left| \left\langle Uf, \mu^{(q)}_{(v,t)}U\psi\right\rangle\right|^2 \frac{dv\,dt}{t^3}.
$$
Polarization formula and the fact that $U$ is unitary allow us to conclude (ii). The proof of (iii) follows in the same way as (ii), with integrals substituted by sums. 
\end{proof}

\begin{thm}\label{Reproducing_Formula_iff_1d_Isometry}
Let us consider $\psi \in \mathcal{H}$. The system
$$
\left\{\mu^{(l)}_{(u,s)}\psi\right \}_{\substack{u\in \R \\ s>0}} \subset \mathcal{H}, 
$$
with the parameter measure $\frac{du\,ds}{s^2}$, is reproducing if and only if the map
$$
g\mapsto \int_Y \overline{\psi\left(s,y\right)} g(y)\,d\kappa(y),
$$
from  $L^2(Y,d\kappa(y))$ into $L^2(\R_+,\frac{ds}{s})$, preserves inner products.
\end{thm}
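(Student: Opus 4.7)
The plan is to reduce the reproducing identity (\ref{reproducing_system_polarization}) to a fiberwise isometry condition for the map $T:L^2(Y,d\kappa)\to L^2(\R_+,\tfrac{ds}{s})$ defined by $(Tg)(s)=\int_Y\overline{\psi(s,y)}g(y)\,d\kappa(y)$. The three ingredients are (a) an explicit formula for $\langle f,\mu^{(l)}_{(u,s)}\psi\rangle$ as a Fourier transform in the translation parameter $u$, (b) Plancherel in $u$, and (c) the scaling substitution $\eta=s\xi$.

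Concretely, unfolding the definition of $\mu^{(l)}_{(u,s)}$ and exchanging the order of integration gives
\[
\langle f,\mu^{(l)}_{(u,s)}\psi\rangle=s^{1/2}\int_{\R_+}F_s(\xi)\,e^{-2\pi iu\xi}\,d\xi,\qquad F_s(\xi)=\int_Y f(\xi,y)\overline{\psi(s\xi,y)}\,d\kappa(y).
\]
Extending $F_s$ by zero to $\xi\le 0$ and applying Plancherel in $u$ yields $\int_{\R}|\langle f,\mu^{(l)}_{(u,s)}\psi\rangle|^2\,du=s\int_{\R_+}|F_s(\xi)|^2\,d\xi$. Multiplying by $s^{-2}$, using Tonelli to carry out the $\xi$-integration first, and then substituting $\eta=s\xi$ for fixed $\xi$ (so that $ds/s=d\eta/\eta$) produces
\[
\int_{\R_+\times\R}\bigl|\langle f,\mu^{(l)}_{(u,s)}\psi\rangle\bigr|^2\,\frac{du\,ds}{s^2}=\int_{\R_+}\bigl\|T(f(\xi,\cdot))\bigr\|^2_{L^2(\R_+,d\eta/\eta)}\,d\xi,
\]
since $F_s(\xi)=(T(f(\xi,\cdot)))(s\xi)$. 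On the other hand, $\|f\|_{\mathcal{H}}^2=\int_{\R_+}\|f(\xi,\cdot)\|^2_{L^2(Y)}\,d\xi$ by Fubini, so the reproducing identity (\ref{reproducing_system_polarization}) is equivalent to
\[
\int_{\R_+}\|T(f(\xi,\cdot))\|^2\,d\xi=\int_{\R_+}\|f(\xi,\cdot)\|^2\,d\xi\qquad\text{for every }f\in\mathcal{H}.
\]

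The implication $(\Leftarrow)$ is then immediate by integrating the slicewise isometry $\|T(f(\xi,\cdot))\|=\|f(\xi,\cdot)\|$. For $(\Rightarrow)$ I would specialize to $f(\xi,y)=\chi_E(\xi)g(y)$ with $E\subset\R_+$ of finite positive Lebesgue measure and arbitrary $g\in L^2(Y,d\kappa)$, obtaining $|E|\cdot\|Tg\|^2=|E|\cdot\|g\|^2$, hence $\|Tg\|=\|g\|$; polarization then upgrades norm preservation to inner-product preservation. The main technical point is justifying Plancherel for $F_s$, since for general $L^2$ data $F_s$ need not lie in $L^2(\R)$; I would handle this by first establishing the chain of identities on the dense subspace spanned by simple tensors $\phi(\xi)g(y)$ with $\phi\in C_c^\infty(\R_+)$ and $g\in L^2(Y,d\kappa)$, where every interchange of integration is absolute, and then extending both sides to all of $\mathcal{H}$ by continuity.
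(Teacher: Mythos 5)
Your argument is correct and follows essentially the same route as the paper's proof: write the coefficient $\langle f,\mu^{(l)}_{(u,s)}\psi\rangle$ as a Fourier transform in $u$, apply Plancherel/Parseval, exploit the dilation invariance of $\tfrac{ds}{s}$ under $s\mapsto s\xi$, and reduce to the fiberwise condition via simple tensors plus a density argument. The only difference is organizational: you work with the non-negative quadratic form and invoke Tonelli, polarizing once at the end, whereas the paper works with the sesquilinear form $\langle f,\mu^{(l)}_{(u,s)}\psi\rangle\langle\mu^{(l)}_{(u,s)}\psi,g\rangle$ and polarizes back and forth to justify each interchange of integration; both devices serve the same purpose and your technical caveat about Plancherel is handled at the same level of rigor as in the paper.
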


\begin{proof}
Let $\R^2_+=\R\times \R_+$. For $f,g \in \mathcal{H}$,
in the first step, we express the inner products as iterated integrals. We obtain
\begin{align}
\int_{\R^2_+}\langle f,\mu_{(u,s)}^{(l)}\psi \rangle &\langle \mu_{(u,s)}^{(l)}\psi ,g \rangle \frac{du\,ds}{s^2}=\nonumber\\ 
=\int_{\R^2_+} &\left( \int_{\R_+} \left( \int_Y f(\xi,y) \overline{\psi(s\xi,y)}\,d\kappa(y)\right) \, e^{-2\pi i u\xi } d\xi \right)
\nonumber \\
&\left( \int_{\R_+} \left( \int_Y \overline{g(\xi,y)}\psi(s\xi,y)\,d\kappa(y)\right) \,e^{2\pi i u\xi} d\xi \right) \,\frac{du\,ds}{s}.
\label{Reproducing_Formula_iff_1d_Isometry_step_1}
\end{align}
 Representation of the inner products as iterated integrals is justified by the fact, that for $u,s$ fixed,  functions $f(\xi,y)\, \mu_{(u,s)}^{(l)}\psi (\xi,y)$,  $g(\xi,y)\, \mu_{(u,s)}^{(l)}\psi (\xi,y)$are integrable with respect to $d\xi\times d\kappa (y)$. In the second step, we observe that via polarization we may represent 
the right hand side of (\ref{Reproducing_Formula_iff_1d_Isometry_step_1})
as a sum of expressions of the same form as in (\ref{Reproducing_Formula_iff_1d_Isometry_step_1})
with $f=g$. The resulting terms are non negative, therefore we are allowed to change the outer integral over $\R_+^2$ into an iterative form, with the integration with respect to $u$ performed internally and with respect to $s$ externally. Polarization performed backwards gives us the right hand side of  (\ref{Reproducing_Formula_iff_1d_Isometry_step_1}) with the outer integral over $\R_+^2$ in its iterative form.

We assume now that $f(x,y)=f_1(x)\,f_2(y)$,  $g(x,y)=g_1(x)\,g_2(y)$, with $f_1,g_1 
\in L^1\cap L^\infty$  on $\R_+$, and $f_2,g_2 \in L^1\cap L^\infty$ on $Y$. 
We are allowed to apply Parseval's formula with respect to $u$ and formula 
(\ref{Reproducing_Formula_iff_1d_Isometry_step_1}) becomes
\begin{align}
\int_{\R^2_+}\langle f,\mu_{(u,s)}^{(l)}\psi \rangle &\langle \mu_{(u,s)}^{(l)}\psi ,g \rangle \frac{du\,ds}{s^2}=\nonumber\\ 
=
\int_{\R_+}\Bigg( \int_{\R_+} &\left( \int_Y f_1(\xi)f_2( y))\overline{\psi(s\xi, y)}\,d\kappa(y)\right)
\nonumber \\
&\left( \int_Y\overline{ g_1(\xi) g_2(y)}\psi(s\xi, y)\,d\kappa(y)\right) \, d\xi \Bigg) \frac{ds}{s}.
\label{Reproducing_Formula_iff_1d_Isometry_step_2}
\end{align}
 In the third step we change the order of integration with respect to $\xi$ and $s$ and we apply multiplicative invariance of measure $\frac{ds}{s}$. Formula (\ref{Reproducing_Formula_iff_1d_Isometry_step_2}) becomes
\begin{align}
\int_{\R^2_+}\langle f,\mu_{(u,s)}^{(l)}\psi \rangle \langle \mu_{(u,s)}^{(l)}\psi ,g \rangle \frac{du\,ds}{s^2}&=\nonumber\\ 
= \int_{\R_+}f_1(\xi)\overline{g_1(\xi)}
\Bigg( \int_{\R_+} &\left( \int_Y f_2(y)\overline{\psi( s,y)}\,d\kappa(y) \right)
\nonumber \\
&\left( \int_Y \overline{g_2(y)}\psi( s,y)\,d\kappa(y)\right) \, \frac{ds}{s}\Bigg) \,d\xi.
\label{reproduction_for_tensors}
\end{align}
Change of the order of integration is again justified via polarization. We write the expressions under the integral signs as sums of the form we obtain for $f_1=g_1$, $f_2=g_2$. Non-negativity of the resulting terms allows us to apply Fubini's theorem.

If the system $\left\{\mu_{(u,s)}^{(l)}\psi \right\}_{u\in \R, s>0}$ is reproducing, then, via formula 
(\ref{reproduction_for_tensors}), we conclude that the maps 
$f\mapsto \int_Y \overline{\psi\left( s,y\right)} f(y)\,d\kappa(y)$ restricted to $f\in L^1\cap L^\infty$  preserve inner products. A standard density argument allows us to extend the statement to all $f\in L^2(Y, d\kappa(y))$. Conversely, if the maps $f\mapsto \int_Y \overline{\psi\left( s,y\right)} f(y)\,d\kappa(y)$  preserve inner products, then 
(\ref{reproduction_for_tensors}) allows us to conclude that for $f,g \in \mathcal{H}$ being finite sums of tensor products of the form $f_1(x)\,f_2(y)$,  $g_1(x)\,g_2(y)$, with $f_1,f_2,g_1,g_2\in L^1\cap L^\infty$ we have
$$
\int_{\R^2_+}\langle f,\mu_{(u,s)}^{(l)}\psi \rangle \langle \mu_{(u,s)}^{(l)}\psi ,g \rangle \frac{du\,ds}{s^2}
=\langle f,g \rangle.
$$ 
Again, a standard density argument allows us to extend the equality to all $f,g\in \mathcal{H}$.
\end{proof}

\begin{cor}\label{Reproducing_Formula_iff_1d_Isometry_q_case}
Let us consider $\psi \in \mathcal{H}$. The system
$$
\left\{\mu^{(q)}_{(v,t)}\psi\right \}_{\substack{v\in \R \\ t>0}} \subset \mathcal{H}, 
$$
with the parameter measure $\frac{dv\,dt}{t^3}$, is reproducing if and only if the map
$$
g\mapsto \int_Y \overline{\psi\left(r,y\right)} g(y)\,d\kappa(y),
$$
from  $L^2(Y,d\kappa(y))$ into $L^2(\R_+,\frac{dr}{r^2})$, preserves inner products.
\end{cor}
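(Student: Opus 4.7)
The plan is to deduce this corollary directly from Theorem~\ref{Reproducing_Formula_iff_1d_Isometry} by transporting everything through the unitary intertwiner $U$ of Proposition~\ref{l_q_equivalence}. Given $\psi\in\mathcal{H}$, set $\psi^{(l)}=U^{-1}\psi$. Since $U$ is unitary, Proposition~\ref{l_q_equivalence}(ii) (applied to $\psi^{(l)}$, noting that $U\psi^{(l)}=\psi$) tells us that the system $\{\mu^{(q)}_{(v,t)}\psi\}$ is reproducing with parameter measure $\frac{dv\,dt}{t^3}$ if and only if $\{\mu^{(l)}_{(u,s)}\psi^{(l)}\}$ is reproducing with parameter measure $\frac{du\,ds}{s^2}$. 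By Theorem~\ref{Reproducing_Formula_iff_1d_Isometry}, the latter happens exactly when the map
\begin{equation*}
g\mapsto \int_Y \overline{\psi^{(l)}(s,y)}\,g(y)\,d\kappa(y)
\end{equation*}
is an inner-product-preserving map from $L^2(Y,d\kappa)$ into $L^2(\R_+,\tfrac{ds}{s})$.

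The remaining task is to rewrite this isometry condition in terms of $\psi$ itself, using the formula $U^{-1}f(s,y)=(2s^{1/2})^{-1/2}f(s^{1/2},y)$ recorded in the proof of Proposition~\ref{l_q_equivalence}. Substituting $\psi^{(l)}(s,y)=(2s^{1/2})^{-1/2}\psi(s^{1/2},y)$ and applying polarization, the condition becomes
\begin{equation*}
\int_{\R_+}\frac{1}{2s^{1/2}}\left|\int_Y \overline{\psi(s^{1/2},y)}\,g(y)\,d\kappa(y)\right|^2\frac{ds}{s}=\|g\|^2_{L^2(Y,d\kappa)}.
\end{equation*}
The change of variables $r=s^{1/2}$, i.e.\ $ds=2r\,dr$, converts $\frac{ds}{2s^{3/2}}$ into $\frac{dr}{r^2}$, and the condition reduces to the assertion that
\begin{equation*}
g\mapsto \int_Y \overline{\psi(r,y)}\,g(y)\,d\kappa(y)
\end{equation*}
is an isometry from $L^2(Y,d\kappa)$ into $L^2(\R_+,\tfrac{dr}{r^2})$. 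By polarization, preservation of norms is equivalent to preservation of inner products, which gives the corollary.

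There is no real obstacle: the argument is a transport of structure along the intertwining identity (\ref{l_q_equivalence_1}). The only thing to be careful about is the bookkeeping of the two Jacobian factors, one from the explicit expression for $U^{-1}$ and one from the change of variable $s=r^2$; they combine cleanly to turn $\tfrac{ds}{s}$ into $\tfrac{dr}{r^2}$, which is exactly the measure appearing in the statement.
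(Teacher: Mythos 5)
Your proposal is correct and follows exactly the route the paper takes: the paper's proof of this corollary is the one-line remark that it is a direct consequence of Proposition \ref{l_q_equivalence}(ii) and Theorem \ref{Reproducing_Formula_iff_1d_Isometry}, and your argument simply carries out that deduction, including the Jacobian bookkeeping (the factor $(2s^{1/2})^{-1}$ from $U^{-1}$ combining with $\frac{ds}{s}$ under $s=r^2$ to give $\frac{dr}{r^2}$) that the paper leaves implicit. No gaps.
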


\begin{proof} The result is a direct consequence of Proposition \ref{l_q_equivalence} (ii) and Theorem \ref{Reproducing_Formula_iff_1d_Isometry}.
\end{proof}

For a measurable function $f$, defined on a topological space $X$, equipped with a Borel measure $\nu$, we define its essential support $\text{ess-supp}\,f$ as the intersection of all closed sets $F$, satisfying 
$f(x)=0$ for $\nu$-almost every $x$ in the complement of $F$.  
We will need the following standard representation of the inner product on $L^2(\R)$, valid for band limited functions (see e.g. Lemma 2.1 in \cite{NoPa})

\begin{lem}\label{Inner_Product_for_BL} Let us suppose that for $f,g\in L^2(\R)$ we have $\text{ess-supp}\,f, g\subset 
[0, 2^{-k}]$. Then
$$
\int_{0}^{2^{-k}}f(\xi)\overline{g(\xi)}\,
d\xi = 2^k\sum_{m\in \Z} \hat{f}\left(2^km\right) \overline{ \hat{g}\left(2^km\right)}.
$$
\end{lem}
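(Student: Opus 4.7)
The plan is to reduce the identity to Parseval's formula on the finite interval $[0, 2^{-k}]$, viewed as a fundamental domain of the lattice $2^{-k}\Z$. Since both $f$ and $g$ are essentially supported in $[0, 2^{-k}]$, they may be identified with their restrictions to that interval, and in turn with $2^{-k}$-periodic functions on $\R$; the ambient Fourier transform of the compactly supported representative then coincides, at the lattice points $2^k m$, with the Fourier coefficients of the periodic extension up to a scaling factor.

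Concretely, I would first record the orthonormal basis $\{2^{k/2} e^{2\pi i 2^k m \xi}\}_{m\in\Z}$ of $L^2([0, 2^{-k}])$. Expanding $f, g\in L^2([0, 2^{-k}])$ in this basis gives Fourier coefficients
\[
c_m^{(f)} = 2^{k}\int_0^{2^{-k}} f(\xi)\,e^{-2\pi i 2^k m\xi}\,d\xi, \qquad c_m^{(g)} = 2^{k}\int_0^{2^{-k}} g(\xi)\,e^{-2\pi i 2^k m\xi}\,d\xi.
\]
Because the essential supports of $f$ and $g$ lie in $[0, 2^{-k}]$, the integrals extend trivially to all of $\R$ without changing value, and the Fourier transform convention $\widehat{f}(\eta)=\int_\R f(\xi)e^{-2\pi i \xi\eta}\,d\xi$ used throughout the paper identifies these coefficients as $c_m^{(f)} = 2^{k}\widehat{f}(2^k m)$ and analogously for $g$.

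Next I would invoke Parseval's identity for $L^2([0, 2^{-k}])$ with respect to the basis above, which produces the inner product in the form
\[
\int_0^{2^{-k}} f(\xi)\overline{g(\xi)}\,d\xi
= \sum_{m\in\Z} \frac{c_m^{(f)} \,\overline{c_m^{(g)}}}{2^{k}}
= 2^{k}\sum_{m\in\Z}\widehat{f}(2^k m)\,\overline{\widehat{g}(2^k m)},
\]
after substituting the formula for $c_m^{(f)}$ and $c_m^{(g)}$ and collecting the factors of $2^{k}$. This is exactly the desired identity.

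I do not expect any real obstacles; the only point requiring minor care is bookkeeping for the Fourier transform convention (here the $2\pi$ is in the exponent, so there are no $\sqrt{2\pi}$ factors) and the normalization constants $2^{\pm k}$ coming from the basis $\{2^{k/2}e^{2\pi i 2^k m\xi}\}$. Since the lemma is cited as a standard fact (cf.\ Lemma 2.1 of \cite{NoPa}), I would keep the exposition to a short paragraph and simply verify that the normalizations line up, rather than develop any additional machinery.
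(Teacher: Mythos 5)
Your proof is correct and complete: the identification of the Fourier coefficients of $f$ with respect to the orthonormal basis $\{2^{k/2}e^{2\pi i 2^{k}m\xi}\}_{m\in\Z}$ of $L^2([0,2^{-k}])$ with the samples $\hat{f}(2^{k}m)$ (valid because the essential support hypothesis lets the integral over $[0,2^{-k}]$ be extended to $\R$), followed by Parseval's identity, yields exactly the stated formula, and your normalization bookkeeping checks out. The paper itself offers no proof of this lemma, quoting it as a standard fact with a pointer to Lemma 2.1 of \cite{NoPa}, so your argument is simply the expected standard proof supplied in full.
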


\begin{thm}\label{Reproducing_Discrete_System_iff_1d_Isometry}
 Let  us consider $\psi \in \mathcal{H}$. Suppose that for almost every $y \in Y$ $\text{ess-supp}\, \psi(\cdot,y)\subset [0,1]$.
The system
$$
\left\{\mu^{(l)}_{\lambda}\psi\right \}_{\lambda \in \Lambda^{(l)}} \subset \mathcal{H}, 
$$
with the parameter measure being the counting measure on $\Lambda^{(l)}$,
is reproducing if and only if for 
every pair $f,g\in L^2(Y,d\kappa (y))$ the equality
\begin{equation}
\langle f,g \rangle = \sum_k \left( \int_Y
\overline{\psi\left(2^k \xi,y\right)} f(y)\,d\kappa(y)\right)
\left( \int_Y \psi\left(2^k \xi,y\right) \overline{g(y)}\,d\kappa(y)\right)
\label{discrete_isometry}
\end{equation}
holds for almost every $\xi\in \R_+$. 
\end{thm}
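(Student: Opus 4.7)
The plan is to imitate the proof of Theorem \ref{Reproducing_Formula_iff_1d_Isometry}, with the Plancherel identity on $\R$ (which there handled the continuous frequency $u$) replaced by the band-limited sampling formula of Lemma \ref{Inner_Product_for_BL} (to handle the discrete parameter $m$). The support hypothesis $\text{ess-supp}\,\psi(\cdot,y)\subset [0,1]$ is precisely what allows the lemma to be invoked at scale $k$: for every $y$, the dilate $\xi\mapsto \psi(2^k\xi,y)$ is essentially supported in $[0,2^{-k}]$.

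First I would restrict to tensor-product test vectors $f=f_1\otimes f_2$, $g=g_1\otimes g_2$ with $f_i,g_i\in L^1\cap L^\infty$, and introduce the auxiliary functions
$$\Psi_k^{h}(\xi)=\int_Y h(y)\,\overline{\psi(2^k\xi,y)}\,d\kappa(y),$$
each essentially supported in $[0,2^{-k}]$. A direct calculation identifies $\langle f,\mu^{(l)}_{(2^km,2^k)}\psi\rangle$ with $2^{k/2}\widehat{f_1\Psi_k^{f_2}}(2^km)$, and the polarized form of Lemma \ref{Inner_Product_for_BL} applied to the band-limited pair $(f_1\Psi_k^{f_2},\,g_1\Psi_k^{g_2})$ then evaluates the inner sum,
$$\sum_{m\in\Z}\langle f,\mu^{(l)}_{(2^km,2^k)}\psi\rangle\,\langle \mu^{(l)}_{(2^km,2^k)}\psi,g\rangle = \int_{\R_+}f_1(\xi)\overline{g_1(\xi)}\,\Psi_k^{f_2}(\xi)\overline{\Psi_k^{g_2}(\xi)}\,d\xi,$$
where the factors $2^{k/2}\cdot 2^{k/2}$ from the two inner products cancel against the $2^{-k}$ produced by the sampling lemma, and the essential support of $\Psi_k^{f_2},\Psi_k^{g_2}$ lets the integration be taken over all of $\R_+$.

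After summing over $k$ and interchanging the sum with the $\xi$-integral --- justified, as in the proof of Theorem \ref{Reproducing_Formula_iff_1d_Isometry}, by polarizing to a non-negative integrand, applying Tonelli's theorem, and polarizing back --- the reproducing identity for tensor-product $f,g$ becomes
$$\int_{\R_+}f_1(\xi)\overline{g_1(\xi)}\left(\sum_{k\in\Z}\Psi_k^{f_2}(\xi)\overline{\Psi_k^{g_2}(\xi)}\right)d\xi = \langle f_1,g_1\rangle_{L^2(\R_+)}\,\langle f_2,g_2\rangle_{L^2(Y)}.$$
Running $f_1,g_1$ through a dense subset of $L^2(\R_+)$ makes this precisely equivalent to the pointwise identity (\ref{discrete_isometry}) (with $f_2,g_2$ playing the role of $f,g$) holding for almost every $\xi\in\R_+$; standard density arguments then extend both implications from tensor-product vectors to arbitrary $f,g\in\mathcal{H}$. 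The main technical obstacle, just as in Theorem \ref{Reproducing_Formula_iff_1d_Isometry}, is the rigorous justification of these Fubini-type interchanges of summation and integration; the decisive device is to polarize first so that every term is non-negative, invoke Tonelli, and then polarize back to recover the original bilinear form.
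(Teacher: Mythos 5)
Your proposal is correct and follows essentially the same route as the paper: reduce to tensor products in $L^1\cap L^\infty$, use the band-limited sampling identity of Lemma \ref{Inner_Product_for_BL} to collapse the sum over $m$ at each scale $k$ (the paper justifies its applicability by an orthogonal tensor decomposition of the kernel, whereas you apply it directly to the band-limited products $f_1\Psi_k^{f_2}$, $g_1\Psi_k^{g_2}$ --- a harmless streamlining), then interchange the $k$-sum with the $\xi$-integral via polarization and Tonelli, and finish with density arguments. The only point worth flagging is that extending the almost-everywhere identity from a dense set of $f,g\in L^2(Y)$ to all of $L^2(Y)$ needs the $\xi$-independent control the paper gets from convergence in the mixed norm space $L^\infty(l^2)$, which your ``standard density arguments'' implicitly rely on.
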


\begin{proof}
In the first step we express the inner products as iterated integrals. We obtain
\begin{align}
\sum_{\lambda \in \Lambda^{(l)}}\langle f,\mu_{\lambda}^{(l)}\psi \rangle &\langle \mu_{\lambda}^{(l)}\psi ,g \rangle =\nonumber\\ 
=\sum_{k,m\in \Z} &\left( \int_{\R_+} \left(\int_Y f(\xi,y) 2^{k/2}\overline{\psi(2^k\xi,y)}\,d\kappa(y) \right)\, e^{-2\pi i 2^k m\xi } d\xi \right)
\nonumber \\
&\left(\int_{\R_+} \left( \int_Y \overline{g(\xi,y)}2^{k/2}\psi(2^k\xi,y)\,d\kappa(y) \right) \,e^{2\pi i 2^k m\xi} d\xi \right).
\label{Tight_Frame_iff_1d_Isometry_step_1}
\end{align}
 Representation of the inner products as iterated integrals is justified by the fact, that for $\lambda$ fixed,  functions $f(\xi,y)\, \mu_{\lambda}^{(l)}\psi (\xi,y)$,  $g(\xi,y)\, \mu_{\lambda}^{(l)}\psi (\xi,y)$are integrable with respect to $d\xi\times d\kappa (y)$. In the second step, we observe that via polarization we may represent 
the right hand side of (\ref{Tight_Frame_iff_1d_Isometry_step_1})
as a sum of expressions of the same form as in (\ref{Tight_Frame_iff_1d_Isometry_step_1})
with $f=g$. The resulting terms are non negative, therefore we are allowed to change the summation over $k,m\in \Z$ into an iterative form, with the summation with respect to $m$ performed internally and with respect to $k$ externally. Polarization performed backwards gives us the right hand side of  (\ref{Tight_Frame_iff_1d_Isometry_step_1}) with the the summation over $k,m\in \Z$ in its iterative form.

We assume now that $f(x,y)=f_1(x)\,f_2(y)$,  $g(x,y)=g_1(x)\,g_2(y)$, with $f_1,g_1 
\in L^1\cap L^\infty$  on $\R_+$, and $f_2,g_2 \in L^1\cap L^\infty$ on $Y$. 
We are allowed to apply  Lemma \ref{Inner_Product_for_BL} and formula 
(\ref{Tight_Frame_iff_1d_Isometry_step_1}) becomes
\begin{align}
\sum_{\lambda \in \Lambda^{(l)}}\langle f,\mu_{\lambda}^{(l)}\psi \rangle &\langle \mu_{\lambda}^{(l)}\psi ,g \rangle =\nonumber\\ 
=
\sum_{k\in \Z}\Bigg(\int_{\R_+} &\left(\int_Y f_1(\xi)f_2( y))\overline{\psi(2^k\xi, y)}\,d\kappa(y)\right)
\nonumber \\
&\left( \int_Y\overline{ g_1(\xi) g_2(y)}\psi(2^k\xi, y)\,d\kappa(y)\right)\, d\xi \Bigg).
\label{Tight_Frame_iff_1d_Isometry_step_2}
\end{align}
 The usage of Lemma \ref{Inner_Product_for_BL} is justified by the fact that for almost every $y\in Y$ we have $\text{ess-supp}\,\psi({2^k}\cdot,y)\subset \left[0, 2^{-k}\right]$. We represent the square integrable kernel $\overline{\psi({2^k}\cdot,\cdot)}$, defined on $\R_+\times Y$, $k$ is fixed, as an infinite sum of orthogonal tensor products of functions, band limited, with respect to the first coordinate, and square integrable, with respect to the second coordinate. Then, we apply Lemma \ref{Inner_Product_for_BL} to finite sums, and next we pass to norm limits in both expressions, the original one, and the one obtained by an application of  Lemma \ref{Inner_Product_for_BL}. 
In the third step we change the order of integration with respect to $\xi$ and summation with respect to $k$. Formula (\ref{Tight_Frame_iff_1d_Isometry_step_2}) becomes
\begin{align}
\sum_{\lambda \in \Lambda^{(l)}}\langle f,\mu_{\lambda}^{(l)}\psi \rangle \langle \mu_{\lambda}^{(l)}\psi ,g \rangle &=\nonumber\\ 
= \int_{\R_+}f_1(\xi)\overline{g_1(\xi)}
\Bigg(\sum_{k\in \Z} &\left( \int_Y f_2(y)\overline{\psi( 2^k\xi,y)}\,d\kappa(y)\right)
\nonumber \\
&\left(\int_Y \overline{g_2(y)}\psi( 2^k\xi,y)\,d\kappa(y)\right)\Bigg)\,d\xi.
\label{discrete_reproduction_for_tensors}
\end{align}
Change of the order of integration and summation is again justified via polarization. We rewrite the expressions as sums of the form we obtain for $f_1=g_1$, $f_2=g_2$. Non-negativity of the resulting terms allows us to apply Fubini's theorem.

If the system $\left\{\mu^{(l)}_{\lambda}\psi\right \}_{\lambda \in \Lambda^{(l)}}$ is reproducing, then, via formula 
(\ref{discrete_reproduction_for_tensors}), we have  
$$\int_\R|\hat{f}_1(\xi)|^2\sum_k \left|\int_\R f_2(x_2) 
\overline{\psi(\widehat{2^k \xi},x_2)}\,dx_2\,\right|^2d\xi
=||f_1||^2||f_2||^2,
$$
for all $f_1,f_2\in \mathcal{S}(\R)$,
and therefore for every $f\in \mathcal{S}(\R)$
\begin{equation}
\sum_k \left|\int_\R 
\overline{\psi(\widehat{2^k \xi},y)} f(y)\,dy\,\right|^2 = ||f||^2
\label{discrete_a_e_norm_condition}
\end{equation}
for almost every $\xi \in \R$. A standard density argument, making use of the convergence in the mixed norm space $L^\infty(l^2)$, allows us to conclude that for every 
$f\in L^2(\R)$
(\ref{discrete_a_e_norm_condition})
holds for almost every $\xi \in \R$. The fact that 
for every pair $f,g\in L^2(\R)$ the equality (\ref{discrete_isometry})
holds for almost every $\xi\in \R$ follows by polarization. 
Conversely, if for every pair $f,g\in L^2(\R)$ the equality (\ref{discrete_isometry})
holds for almost every $\xi\in \R$, then
(\ref{discrete_reproduction_for_tensors}) allows us to conclude that for $f,g \in L^2(\R^2)$ being finite sums of tensor products of the form $f_1(x_1)\,f_2(x_2)$,  $g_1(x_1)\,g_2(x_2)$, with $f_1,f_2,g_1,g_2\in \mathcal{S}(\R)$ we have
$$
\sum_{k,m}\langle f,\psi_{k,m} \rangle \langle \psi_{k,m},g \rangle =\langle f,g\rangle.
$$
Again, a standard density argument allows us to extend the equality to all $f,g\in L^2(\R^2)$.
\end{proof}

\begin{cor}\label{Reproducing_Discrete_System_iff_1d_Isometry_q_case}
 Let us consider $\psi \in \mathcal{H}$. Suppose that for almost every $y \in Y$ $\text{ess-supp}\, \psi(\cdot,y)\subset [0,1]$.
The system
$$
\left\{\mu^{(q)}_{\lambda}\psi\right \}_{\lambda \in \Lambda^{(q)}} \subset \mathcal{H}, 
$$
with the parameter measure being the counting measure on $\Lambda^{(q)}$,
is reproducing if and only if for 
every pair $f,g\in L^2(Y,d\kappa (y))$ the equality
\begin{equation}
\langle f,g \rangle = \frac{1}{2r}\sum_k 2^{-k/2}\left( \int_Y
\overline{\psi\left(2^{k/2} \, r,y\right)} f(y)\,d\kappa(y)\right)
\left(\int_Y \psi\left(2^{k/2} \, r,y\right) \overline{g(y)}\,d\kappa(y)\right)
\nonumber
\end{equation}
holds for almost every $r\in \R_+$. 
\end{cor}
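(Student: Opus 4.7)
The plan is to reduce Corollary \ref{Reproducing_Discrete_System_iff_1d_Isometry_q_case} to Theorem \ref{Reproducing_Discrete_System_iff_1d_Isometry} via Proposition \ref{l_q_equivalence}(iii), using the auxiliary generator $\tilde\psi = U^{-1}\psi$. First I would invoke Proposition \ref{l_q_equivalence}(iii) in the contrapositive direction: applying it with $\varphi = U^{-1}\psi$ (so that $U\varphi = \psi$) gives that $\{\mu^{(q)}_\lambda \psi\}_{\lambda \in \Lambda^{(q)}}$ is reproducing if and only if $\{\mu^{(l)}_\lambda \tilde\psi\}_{\lambda \in \Lambda^{(l)}}$ is reproducing. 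This shifts the question about the quadratic representation into a question about the linear one where Theorem \ref{Reproducing_Discrete_System_iff_1d_Isometry} is directly applicable.

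Before applying that theorem I would verify the essential support hypothesis transfers. Since $U^{-1}f(r,y) = (2r^{1/2})^{-1/2} f(r^{1/2}, y)$, the assumption $\text{ess-supp}\,\psi(\cdot,y) \subset [0,1]$ for a.e.\ $y$ forces $\tilde\psi(r,y) = 0$ for a.e.\ $r$ with $r^{1/2} > 1$, i.e., $\text{ess-supp}\,\tilde\psi(\cdot,y) \subset [0,1]$ for a.e.\ $y$. Thus Theorem \ref{Reproducing_Discrete_System_iff_1d_Isometry} applies to $\tilde\psi$, yielding the equivalence with the identity
\begin{equation*}
\langle f,g\rangle = \sum_k \left(\int_Y \overline{\tilde\psi(2^k\xi,y)}\, f(y)\,d\kappa(y)\right)\left(\int_Y \tilde\psi(2^k\xi,y)\, \overline{g(y)}\,d\kappa(y)\right)
\end{equation*}
for a.e.\ $\xi \in \R_+$ and every $f,g \in L^2(Y, d\kappa(y))$.

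The final step is a substitution. I would compute $\tilde\psi(2^k\xi, y) = (2 \cdot 2^{k/2}\xi^{1/2})^{-1/2}\psi(2^{k/2}\xi^{1/2}, y)$, pull the scalar prefactor out of the product of the two inner integrals (it contributes $(2\cdot 2^{k/2}\xi^{1/2})^{-1}$), and change variables $r = \xi^{1/2}$. Because $\xi \mapsto \xi^{1/2}$ is an absolutely continuous bijection of $\R_+$ onto itself, the "almost every $\xi$" statement is equivalent to "almost every $r\in \R_+$", and the prefactor becomes $\frac{2^{-k/2}}{2r}$. Collecting these factors reproduces exactly the equality stated in the corollary.

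No step poses a genuine obstacle; the argument is a bookkeeping translation between the $\mu^{(l)}$ and $\mu^{(q)}$ pictures. The only place where one must be slightly careful is checking that the "a.e.\ $\xi$" quantifier converts properly to "a.e.\ $r$" under $\xi = r^2$, but this is immediate since the change of variables preserves Lebesgue null sets in both directions on $\R_+$.
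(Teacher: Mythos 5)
Your proposal is correct and follows exactly the route the paper intends: the paper's proof is the one-line observation that the corollary is a direct consequence of Proposition \ref{l_q_equivalence}(iii) and Theorem \ref{Reproducing_Discrete_System_iff_1d_Isometry}, and your argument simply fills in the bookkeeping (transfer of the essential-support hypothesis under $U^{-1}$, the scalar factor $(2\cdot 2^{k/2}\xi^{1/2})^{-1}$, and the null-set-preserving change of variables $\xi=r^2$), all of which checks out.
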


\begin{proof} The result is a direct consequence of Proposition \ref{l_q_equivalence} (iii) and Theorem \ref{Reproducing_Discrete_System_iff_1d_Isometry}.
\end{proof}

Let $\psi ^S(\xi)=\chi_{(1,2]}(\xi)$. The system $\left\{ \psi^S_{\lambda} \right\}_{\lambda \in \Lambda^{(l)}}\subset L^2(\R_+)$, with 
$$\psi^S_\lambda (\xi)=2^{k/2}\chi_{(1,2]}\left(2^{k}\xi\right)e^{2\pi i 2^km\xi},
$$
is the Shannon wavelet system adapted to $\R_+$, with $L^2(\R_+)$ representing the Fourier transform domain. It is a family of standard trigonometric systems adapted to the dyadic partition of $\R_+$. We describe its lifts to
$L^2(\R_+ \times Y)$ , with $Y$ being $\R$ and $\T$, lifts adapting the constructions done in \cite{NoPa} to the current context.  First, we do it for $\mu^{(l)}$, and then we transfer the resulting systems to $\mu^{(q)}$ via the unitary map of Proposition \ref{l_q_equivalence}.

We move now to $L^2(\R_+\times Y)$. We introduce $e_{k,l} (y)=\chi_{(k,k+1]}(y)\,e^{2\pi ily}$, with $k,l\in \Z$, and $f_m (s)=\chi_{(2^{-m},2^{-m+1} ]} (s)$, with $m \ge 1, m\in \Z$, i.e. $m\in \N$. The system $\left\{e_{k,l} \right\}_{k,l\in \Z}$ is an orthonormal basis of $L^2 (\R)$, and $\left\{c_f^{-1} f_m \right\}_{m\ge 1}$, where $c_f=(\log 2)^{1/2}$, is an orthonormal system of 
$L^2 (\R_+,\frac{ds}{s})$. Let $D_\R:\Z \times \Z \rightarrow \N$, $D_\T:\Z \rightarrow \N$ be two bijections. We define the corresponding  generating functions $\psi^{D_\R} \in L^2 (\R_+\times \R)$,  $\psi^{D_\T} \in L^2 (\R_+\times \T)$ as 
\begin{equation}
\psi^{D_\R}(\xi,y)=\sum_{k,l\in \Z} f_{D_\R(k,l)}(\xi)e_{k,l} (y),
\label{2d_generating_function_R}
\end{equation}
\begin{equation}
\psi^{D_\T}(\xi,y)=\sum_{l\in \Z} f_{D_\T(l)}(\xi)e_{0,l} (y).
\label{2d_generating_function_T}
\end{equation}

The following two lemmas summarize the basic properties of the generating functions $\psi^{D_\R}$, $\psi^{D_\T}$. Their
proofs follow exactly the steps of the proof of Lemma 2.2 of \cite{NoPa}, and are omitted from the current presentation.

\begin{lem}\label{Basic_psi_^D_R_properties}
Let $\psi^{D_\R}$ be the generating function defined in
(\ref{2d_generating_function_R}). Then
\begin{align}
\text{(i)}\, &\text{the sum } (\ref{2d_generating_function_R}) \text{ representing } \psi^{D_\R}(\xi,y)\, \text{ consists of a single term } \nonumber \\
&f_{D_\R(k,l)}(\xi)e_{k,l}(y), 
\text{for } \xi\in (0,1] \text{, with the unique }k,l 
\text{ satisfying } 
\nonumber \\
&\xi \in (2^{-D_\R(k,l)}, 2^{-D_\R(k,l)+1}], \text{ and it contains no non-zero terms for }
\nonumber \\
&\xi\notin  (0,1],
\nonumber \\
\text{(ii)}\, &\text{ess-supp}\,\psi^{D_\R}({\cdot},y)\subset [0,1] \text{ for every }y\in \R ,
\nonumber \\
\text{(iii)}\, &\int_{\R_+ \times \R}\left| \psi^{D_\R}(\xi,y) \right|^2dy\, d\xi =1,
\nonumber \\
\text{(iv)}\,&S_N^{D_\R}(\xi,y)=\sum_{|k|\le N,|l|\le N}f_{D_\R(k,l)}(\xi)e_{k,l}(y)
\text{ converges to }\psi^{D_\R}(\xi,y) 
\nonumber \\
&\text{ in } L^2(\R_+ \times \R) \text{, as }
N \rightarrow \infty.
\nonumber
\end{align}
\end{lem}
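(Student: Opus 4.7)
The plan is to prove (i) first by pure bookkeeping and then derive (ii), (iii), (iv) from it together with the fact that the series consists of pairwise orthogonal terms.

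For (i), I would fix $\xi \in (0,1]$ and use that the intervals $(2^{-m}, 2^{-m+1}]$ for $m \in \N$ form a partition of $(0,1]$. Hence there exists exactly one index $m_0 = m_0(\xi) \in \N$ with $f_{m_0}(\xi) \neq 0$. Because $D_\R : \Z \times \Z \to \N$ is a bijection, there is then a unique pair $(k_0, l_0)$ with $D_\R(k_0,l_0) = m_0$, so the only possibly non-zero term in the defining sum is $f_{D_\R(k_0,l_0)}(\xi)\,e_{k_0,l_0}(y)$; this is exactly the claim. For $\xi \notin (0,1]$, each $f_m$ vanishes (its support lies in $(0,1]$ since $m \ge 1$), and every term of the sum is identically zero in $\xi$, which immediately yields (ii) after observing that $\psi^{D_\R}(\xi,y) = 0$ for $\xi \notin (0,1]$ and every fixed $y \in \R$.

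For (iii) and (iv), I would first record the crucial orthogonality: for $(k,l) \neq (k',l')$ the $\xi$-supports of $f_{D_\R(k,l)}$ and $f_{D_\R(k',l')}$ are disjoint (bijectivity of $D_\R$), so the two tensor products $f_{D_\R(k,l)}(\xi)e_{k,l}(y)$ and $f_{D_\R(k',l')}(\xi)e_{k',l'}(y)$ have disjoint $\xi$-supports and are therefore orthogonal in $L^2(\R_+ \times \R)$. A direct computation gives
$$\bigl\| f_{D_\R(k,l)}(\xi)\,e_{k,l}(y) \bigr\|_{L^2}^2 = \int_{\R_+} |f_{D_\R(k,l)}(\xi)|^2\, d\xi \cdot \int_\R |e_{k,l}(y)|^2\,dy = 2^{-D_\R(k,l)}.$$
Summing the squared norms over $(k,l)$ and reindexing by $m = D_\R(k,l)$ yields $\sum_{m \in \N} 2^{-m} = 1$. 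This finite value simultaneously gives (iii) (provided the series converges, which is the content of (iv)) and establishes (iv): partial sums of an orthogonal series in a Hilbert space converge in norm whenever the sum of squared norms is finite, and here the summability does not depend on which finite set $\{|k|\le N, |l|\le N\}$ one uses, so the specific square truncation $S_N^{D_\R}$ converges to $\psi^{D_\R}$ in $L^2(\R_+ \times \R)$.

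The only step requiring even mild attention is verifying that the pointwise ``single term'' description in (i) is compatible with the $L^2$ definition of the sum in (\ref{2d_generating_function_R}); this is handled by first treating (iv) as an abstract Hilbert-space convergence statement from the orthogonality above, then identifying the $L^2$-limit with the pointwise expression via (i) almost everywhere. I do not anticipate any serious obstacle: the argument is essentially a rearrangement of the dyadic decomposition of $(0,1]$ through the bijection $D_\R$, and everything reduces to the identity $\sum_{m \in \N} 2^{-m} = 1$.
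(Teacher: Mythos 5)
Your proof is correct and complete; note that the paper itself omits the proof of this lemma, deferring to Lemma 2.2 of \cite{NoPa}, and your argument --- the dyadic partition of $(0,1]$ combined with the bijectivity of $D_\R$, pairwise disjoint $\xi$-supports of the terms, and the summation $\sum_{m\in\N}2^{-m}=1$ --- is exactly the standard argument that reference carries out. Your closing remark reconciling the pointwise single-term description with the $L^2$ convergence of the square truncations is the right (and only mildly delicate) point, and you handle it correctly.
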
 

\begin{lem}\label{Basic_psi_^D_T_properties} 
Let $\psi^{D_\T}$ be the generating function defined in
(\ref{2d_generating_function_T}). Then
\begin{align}
\text{(i)}\, &\text{the sum } (\ref{2d_generating_function_T}) \text{ representing } \psi^{D_\T}(\xi,y)\, \text{ consists of a single term } \nonumber \\
&f_{D_\T(l)}(\xi)e_{0,l}(y), 
\text{for } \xi\in (0,1] \text{, with the unique }l 
\text{ satisfying } 
\nonumber \\
&\xi \in (2^{-D_\T(l)}, 2^{-D_\T(l)+1}], \text{ and it contains no non-zero terms for }
\nonumber \\
&\xi\notin  (0,1],
\nonumber \\
\text{(ii)}\, &\text{ess-supp}\,\psi^{D_\T}({\cdot},y)\subset [0,1] \text{ for every }y\in \R ,
\nonumber \\
\text{(iii)}\, &\int_{\R_+ \times \T}\left| \psi^{D_\T}(\xi,y) \right|^2dy\, d\xi =1,
\nonumber \\
\text{(iv)}\,&S_N^{D_\T}(\xi,y)=\sum_{|l|\le N}f_{D_\T(l)}(\xi)e_{0,l}(y)
\text{ converges to }\psi^{D_\T}(\xi,y) 
\nonumber \\
&\text{ in } L^2(\R_+ \times \T) \text{, as }
N \rightarrow \infty.
\nonumber
\end{align}
\end{lem}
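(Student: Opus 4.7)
The plan is to mirror the proof of the preceding Lemma \ref{Basic_psi_^D_R_properties} almost verbatim, since $\psi^{D_\T}$ is structurally a single-index analog of $\psi^{D_\R}$ with the $y$-variable on $\T$ (parametrized as $(0,1]$) in place of $\R$, and with the double index $(k,l)\in\Z\times\Z$ replaced by $l\in\Z$. The whole argument reduces to careful bookkeeping about the supports of the building blocks $f_m(\xi)=\chi_{(2^{-m},2^{-m+1}]}(\xi)$ combined with the fact that $D_\T:\Z\to\N$ is a bijection.

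The key step is (i). First I would note that the intervals $\{(2^{-m},2^{-m+1}]\}_{m\geq 1}$ form a partition of $(0,1]$ and that $f_m$ vanishes outside its defining interval. Since $D_\T$ is a bijection, the reindexed collection $\{(2^{-D_\T(l)},2^{-D_\T(l)+1}]\}_{l\in\Z}$ is the very same partition of $(0,1]$. Hence for any $\xi\in(0,1]$ precisely one value of $l$ yields a non-zero term, and for $\xi\notin(0,1]$ every summand vanishes. Item (ii) follows at once, since the pointwise support of $\psi^{D_\T}(\cdot,y)$ is contained in $(0,1]\subset[0,1]$ uniformly in $y$.

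For (iii), using (i) the series defining $\psi^{D_\T}$ has at most one non-zero summand at each $\xi$, so $|\psi^{D_\T}(\xi,y)|^2=\sum_{l\in\Z}|f_{D_\T(l)}(\xi)|^2|e_{0,l}(y)|^2$. I would apply Fubini, use $\int_\T|e_{0,l}|^2=1$ and $\int_{\R_+}|f_m|^2\,d\xi=2^{-m}$, and then reindex $m=D_\T(l)$ to obtain the geometric series $\sum_{m=1}^\infty 2^{-m}=1$. For (iv), the tail $\psi^{D_\T}-S_N^{D_\T}=\sum_{|l|>N}f_{D_\T(l)}e_{0,l}$ is itself of the same type, a sum whose summands have pairwise disjoint $\xi$-supports, so an identical computation gives $\|\psi^{D_\T}-S_N^{D_\T}\|^2=\sum_{|l|>N}2^{-D_\T(l)}$, which is the tail of an absolutely convergent series and thus tends to $0$.

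There is no genuine obstacle; the only point requiring attention is the indexing bookkeeping invoked twice, namely that bijectivity of $D_\T$ permits the replacement of $\sum_{l\in\Z}2^{-D_\T(l)}$ by $\sum_{m=1}^\infty 2^{-m}$, and that removing a finite subset of indices from this series leaves a vanishing tail. Both steps are legitimate because $\sum_m 2^{-m}$ converges absolutely.
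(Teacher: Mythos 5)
Your proof is correct, and it follows exactly the route the paper intends: the paper omits the argument, stating only that it ``follows the steps of the proof of Lemma 2.2 of \cite{NoPa}'', and that argument is precisely the disjoint-support bookkeeping for the dyadic intervals $(2^{-m},2^{-m+1}]$ combined with the bijectivity of $D_\T$ and the convergence of $\sum_{m\ge 1}2^{-m}$ that you carry out. Your computations of the norms in (iii) and (iv) are accurate, so nothing further is needed.
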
 

\begin{thm}\label{2d_Reproducing_Systems_D_R_D_T}
Systems $\left\{c_f^{-1}\mu^{(l)}_{(u,s)}\psi^{D_\R}\right\}_{u\in \R,s>0}$,  $\left\{c_f^{-1}\mu^{(l)}_{(u,s)}\psi^{D_\T}\right\}_{u\in \R,s>0}$, with generating functions $\psi^{D_\R}$, $\psi^{D_\T}$ defined in  (\ref{2d_generating_function_R}), (\ref{2d_generating_function_T}), both with the same parameter measure $\frac{du\,ds}{s^2}$, are reproducing in $L^2(\R_+\times \R)$,  $L^2(\R_+\times \T)$, respectively.
\end{thm}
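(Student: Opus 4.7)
The plan is to reduce both claims to the isometry criterion of Theorem \ref{Reproducing_Formula_iff_1d_Isometry}. I would apply that theorem with generator $\psi = c_f^{-1}\psi^{D_\R}$ and $(Y,\kappa) = (\R, \text{Lebesgue})$ for the first system, and with $\psi = c_f^{-1}\psi^{D_\T}$ and $(Y,\kappa) = (\T, \text{Lebesgue})$ for the second. The task then becomes: show that the map
$$
K \colon g \longmapsto c_f^{-1}\int_Y \overline{\psi(s,y)}\,g(y)\,d\kappa(y)
$$
is an isometry from $L^2(Y, d\kappa)$ into $L^2(\R_+, ds/s)$.

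The decisive observation is Lemma \ref{Basic_psi_^D_R_properties}(i) (respectively Lemma \ref{Basic_psi_^D_T_properties}(i)). For any $s \in (0,1]$ the series defining $\psi^{D_\R}(s, \cdot)$ collapses to a single nonzero summand $f_{D_\R(k,l)}(s)\,e_{k,l}$ with $f_{D_\R(k,l)}(s) = 1$, where $(k,l) = D_\R^{-1}(m(s))$ and $m(s)$ is the unique index with $s \in (2^{-m(s)}, 2^{-m(s)+1}]$; for $s > 1$ the kernel vanishes. Hence for a.e.\ $s$, $\psi^{D_\R}(s,\cdot)$ is literally a single orthonormal-basis element $e_{k(s), l(s)}$ of $L^2(\R)$, so that $K(g)(s) = c_f^{-1}\langle g, e_{k(s),l(s)}\rangle$ is simply the corresponding coefficient of $g$ in the ONB $\{e_{k,l}\}_{k,l \in \Z}$.

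To finish, I would partition $\R_+$ into the dyadic intervals $I_m = (2^{-m}, 2^{-m+1}]$, $m \ge 1$, on which $K(g)$ is constant, use $\int_{I_m} ds/s = \log 2 = c_f^2$, and exploit the bijection $D_\R : \Z^2 \to \N$ to rewrite $\|K(g)\|^2_{L^2(\R_+, ds/s)}$ as $\sum_{(k,l) \in \Z^2} |\langle g, e_{k,l}\rangle|^2$, which equals $\|g\|^2$ by Parseval's identity. Polarization then upgrades the norm identity to preservation of inner products, and Theorem \ref{Reproducing_Formula_iff_1d_Isometry} yields the reproducing property. The $\psi^{D_\T}$ case is essentially identical, with the ONB $\{e_{0,l}\}_{l \in \Z}$ of $L^2(\T)$ replacing $\{e_{k,l}\}_{k,l \in \Z}$ and the bijection $D_\T : \Z \to \N$ replacing $D_\R$. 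I anticipate no serious obstacle; the only mildly delicate point is that $\psi^{D_\R}$ and $\psi^{D_\T}$ are a priori only $L^2$-limits of partial sums, but Lemmas \ref{Basic_psi_^D_R_properties}(i) and \ref{Basic_psi_^D_T_properties}(i) guarantee that at each $s$ the defining series has at most one nonzero term, making every subsequent pointwise manipulation with $\psi(s, \cdot)$ entirely unambiguous.
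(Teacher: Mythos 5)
Your proposal is correct and follows essentially the same route as the paper: the paper's own proof invokes Theorem \ref{Reproducing_Formula_iff_1d_Isometry} together with the single-term structure recorded in Lemmas \ref{Basic_psi_^D_R_properties} and \ref{Basic_psi_^D_T_properties}, deferring the remaining computation to Corollary 1.2 of \cite{NoPa}. Your argument simply writes out those deferred steps explicitly --- the collapse of $\psi^{D_\R}(s,\cdot)$ to a single basis element $e_{k(s),l(s)}$, the dyadic partition with $\int_{I_m} ds/s = \log 2 = c_f^2$, the bijection $D_\R$, and Parseval --- and is sound.
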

\begin{proof} Both proofs, based on Theorem \ref{Reproducing_Formula_iff_1d_Isometry}, follow the steps of the proof of Corollary 1.2 of \cite{NoPa}, with the adjustments indicated in Lemmas
\ref{Basic_psi_^D_R_properties}, \ref{Basic_psi_^D_T_properties}, respectively.
\end{proof}

\begin{thm}\label{2d_Orthonormal_Bases_D_R_D_T}
Systems $\left\{\mu^{(l)}_{\lambda}\psi^{D_\R}\right\}_{\lambda \in \Lambda ^{(l)}}$,   $\left\{\mu^{(l)}_{\lambda}\psi^{D_\T}\right\}_{\lambda \in \Lambda ^{(l)}}$,  with $\psi^{D_\R}$, $\psi^{D_\T}$ defined in  (\ref{2d_generating_function_R}), (\ref{2d_generating_function_T}), are orthonormal bases of $L^2(\R_+\times \R)$,  $L^2(\R_+\times \T)$, respectively.
\end{thm}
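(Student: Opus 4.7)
The plan is to apply Theorem \ref{Reproducing_Discrete_System_iff_1d_Isometry} to identify each system as a Parseval frame, and then to upgrade to an orthonormal basis via the elementary observation that a Parseval frame of unit vectors is automatically orthonormal. The essential-support hypothesis of Theorem \ref{Reproducing_Discrete_System_iff_1d_Isometry} is supplied directly by Lemmas \ref{Basic_psi_^D_R_properties}(ii) and \ref{Basic_psi_^D_T_properties}(ii), so everything reduces to verifying the pointwise identity (\ref{discrete_isometry}) for almost every $\xi \in \R_+$.

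Consider first $\psi = \psi^{D_\R}$. Fix $\xi \in \R_+$ outside the countable null set $\{2^j\}_{j \in \Z}$, and let $k_0 = k_0(\xi) \in \Z$ denote the unique integer with $2^{k_0}\xi \in (1/2, 1]$. For $k \le k_0$ one has $2^k\xi \in (2^{-m}, 2^{-m+1}]$ with $m = m(k,\xi) := k_0 - k + 1 \ge 1$, while for $k > k_0$ one has $2^k\xi > 1$ and hence $\psi^{D_\R}(2^k\xi, y) = 0$ by Lemma \ref{Basic_psi_^D_R_properties}(i). Since $D_\R$ is a bijection from $\Z^2$ onto $\N$, the map $k \mapsto D_\R^{-1}(m(k,\xi))$ is a bijection from $\{k \in \Z : k \le k_0\}$ onto $\Z^2$, and along it Lemma \ref{Basic_psi_^D_R_properties}(i) gives $\psi^{D_\R}(2^k\xi, y) = e_{k', l'}(y)$ with $(k', l') = D_\R^{-1}(m(k,\xi))$. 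Substituting and reindexing, the right-hand side of (\ref{discrete_isometry}) collapses to $\sum_{(k', l') \in \Z^2} \langle f, e_{k', l'}\rangle \overline{\langle g, e_{k', l'}\rangle}$, which equals $\langle f, g\rangle$ by Parseval, since $\{e_{k,l}\}_{k,l \in \Z}$ is an orthonormal basis of $L^2(\R)$. The argument for $\psi = \psi^{D_\T}$ is identical, with $\Z^2$ replaced by $\Z$ and Parseval applied to the orthonormal basis $\{e_{0, l}\}_{l \in \Z}$ of $L^2(\T)$.

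Theorem \ref{Reproducing_Discrete_System_iff_1d_Isometry} then gives that both systems are reproducing with the counting measure, i.e., Parseval frames. To conclude, note that each $\mu^{(l)}_\lambda$ is unitary on $\mathcal{H}$ and that $\|\psi^{D_\R}\| = \|\psi^{D_\T}\| = 1$ by Lemmas \ref{Basic_psi_^D_R_properties}(iii) and \ref{Basic_psi_^D_T_properties}(iii); hence every frame element has unit norm. For any Parseval frame $\{\varphi_i\}$ of unit vectors, applying $\|f\|^2 = \sum_i |\langle f, \varphi_i\rangle|^2$ to $f = \varphi_j$ and isolating the $i = j$ term forces $\langle \varphi_j, \varphi_i\rangle = 0$ for $i \ne j$, so the frame is a complete orthonormal system. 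The only delicate step in the whole argument is the reindexing bijection in the second paragraph; everything else is routine bookkeeping with the previously stated lemmas.
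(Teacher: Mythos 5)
Your proposal is correct and follows essentially the same route as the paper: the paper's proof likewise rests on Theorem \ref{Reproducing_Discrete_System_iff_1d_Isometry} together with Lemmas \ref{Basic_psi_^D_R_properties} and \ref{Basic_psi_^D_T_properties}, merely deferring the details (the single-term evaluation of $\psi^{D_\R}(2^k\xi,\cdot)$, the reindexing onto the orthonormal basis $\{e_{k,l}\}$, and the upgrade from Parseval frame of unit vectors to orthonormal basis) to the proof of Corollary 1.4 of \cite{NoPa}. You have simply written out explicitly the steps the paper cites by reference, and each of them checks out.
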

\begin{proof}  Both proofs, based on Theorem \ref{Reproducing_Discrete_System_iff_1d_Isometry}, follow the steps of the proof of Corollary 1.4 of \cite{NoPa}, with the adjustments indicated in Lemmas
\ref{Basic_psi_^D_R_properties}, \ref{Basic_psi_^D_T_properties}, respectively.
\end{proof}

\begin{cor}\label{2d_Reproducing_Systems_D_R_D_T_q}
Systems $\left\{c_f^{-1}\mu^{(q)}_{(v,t)}U\psi^{D_\R}\right\}_{v\in \R,t>0}$,  $\left\{c_f^{-1}\mu^{(q)}_{(v,t)}U\psi^{D_\T}\right\}_{v\in \R,t>0}$, with generating functions $U\psi^{D_\R}$, $U\psi^{D_\T}$, defined via an application of the unitary map $U$ of  Proposition \ref{l_q_equivalence} to functions (\ref{2d_generating_function_R}), (\ref{2d_generating_function_T}), both systems with the same parameter measure $\frac{dv\,dt}{t^3}$, are reproducing in $L^2(\R_+\times \R)$,  $L^2(\R_+\times \T)$, respectively.
\end{cor}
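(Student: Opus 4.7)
The proof will be essentially a one-step chain combining two results already established in the paper. The plan is to invoke Theorem \ref{2d_Reproducing_Systems_D_R_D_T} to obtain the reproducing property of the $\mu^{(l)}$-systems generated by $\psi^{D_\R}$ and $\psi^{D_\T}$ (scaled by $c_f^{-1}$), and then transfer this property to the $\mu^{(q)}$-setting by Proposition \ref{l_q_equivalence}(ii).

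More precisely, I would first apply Theorem \ref{2d_Reproducing_Systems_D_R_D_T} to conclude that $\{c_f^{-1}\mu^{(l)}_{(u,s)}\psi^{D_\R}\}_{u\in\R,s>0}$ with parameter measure $\frac{du\,ds}{s^2}$ is a reproducing system in $L^2(\R_+\times\R)$, and analogously for $\psi^{D_\T}$ in $L^2(\R_+\times\T)$. Next, taking $\psi := c_f^{-1}\psi^{D_\R}$ (resp. $\psi := c_f^{-1}\psi^{D_\T}$) in Proposition \ref{l_q_equivalence}(ii), the equivalence between reproducing properties of $\{\mu^{(l)}_{(u,s)}\psi\}$ with parameter measure $\frac{du\,ds}{s^2}$ and $\{\mu^{(q)}_{(v,t)}U\psi\}$ with parameter measure $\frac{dv\,dt}{t^3}$ yields the desired conclusion. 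The scalar $c_f^{-1}$ commutes with the unitary $U$ by linearity, so $U(c_f^{-1}\psi^{D_\R}) = c_f^{-1}U\psi^{D_\R}$ and similarly for $\psi^{D_\T}$, guaranteeing that the normalizing constant propagates correctly into the $\mu^{(q)}$-systems stated in the corollary.

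There is no genuine obstacle in this proof, since both ingredients have been established in full generality earlier in the section; the result is effectively a transport of structure via the unitary intertwiner $U$ of Proposition \ref{l_q_equivalence}. The only point worth remarking, which I would mention explicitly for clarity, is that Proposition \ref{l_q_equivalence}(ii) is an equivalence of reproducing formulae between arbitrary generating vectors $\psi$ and $U\psi$, so nothing beyond a direct substitution is required.
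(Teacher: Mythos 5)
Your proposal is correct and matches the paper's own proof, which likewise deduces the corollary directly from Theorem \ref{2d_Reproducing_Systems_D_R_D_T} combined with Proposition \ref{l_q_equivalence}(ii). Your additional remark that the constant $c_f^{-1}$ commutes with the unitary $U$ is a harmless elaboration of a detail the paper leaves implicit.
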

\begin{proof} Both proofs follow directly out of Theorem \ref{2d_Reproducing_Systems_D_R_D_T} and Proposition \ref{l_q_equivalence} (ii).
\end{proof}

\begin{cor}\label{2d_Orthonormal_Bases_D_R_D_T_q}
Systems $\left\{\mu^{(q)}_{\lambda}U\psi^{D_\R}\right\}_{\lambda \in \Lambda ^{(q)}}$,   $\left\{\mu^{(q)}_{\lambda}U\psi^{D_\T}\right\}_{\lambda \in \Lambda ^{(q)}}$,  with $U\psi^{D_\R}$, $U\psi^{D_\T}$ defined via an application of the unitary map $U$ of  Proposition \ref{l_q_equivalence} to functions (\ref{2d_generating_function_R}), (\ref{2d_generating_function_T}), are orthonormal bases of $L^2(\R_+\times \R)$,  $L^2(\R_+\times \T)$, respectively.
\end{cor}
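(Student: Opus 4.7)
The plan is to deduce this corollary directly by transporting the orthonormal bases produced in Theorem \ref{2d_Orthonormal_Bases_D_R_D_T} through the unitary intertwiner $U$ from Proposition \ref{l_q_equivalence}. Proposition \ref{l_q_equivalence} (iii) already guarantees that the two systems in question are reproducing (i.e., Parseval frames with respect to the counting measure), so the only additional content to verify is that they are in fact orthonormal, not merely Parseval.

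First I would unfold the intertwining identity \eqref{l_q_equivalence_1} in the form $U\mu^{(l)}_{(u,s)} = \mu^{(q)}_{(2u,s^{1/2})}U$. Specializing to $\lambda=(2^k m, 2^k)\in\Lambda^{(l)}$ gives
$$
U\mu^{(l)}_{(2^k m,\,2^k)} = \mu^{(q)}_{(2^{k+1}m,\,2^{k/2})}U,
$$
and the parameter $(2^{k+1}m,2^{k/2})$ runs precisely over $\Lambda^{(q)}$ as $(k,m)$ ranges over $\Z\times\Z$. Hence the assignment $(2^km,2^k)\mapsto(2^{k+1}m,2^{k/2})$ is a bijection of $\Lambda^{(l)}$ onto $\Lambda^{(q)}$, and $U$ maps the family $\{\mu^{(l)}_\lambda\psi^{D_\R}\}_{\lambda\in\Lambda^{(l)}}$ bijectively onto $\{\mu^{(q)}_{\lambda'}U\psi^{D_\R}\}_{\lambda'\in\Lambda^{(q)}}$, and analogously for $\psi^{D_\T}$.

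Since Theorem \ref{2d_Orthonormal_Bases_D_R_D_T} asserts that $\{\mu^{(l)}_\lambda\psi^{D_\R}\}_{\lambda\in\Lambda^{(l)}}$ and $\{\mu^{(l)}_\lambda\psi^{D_\T}\}_{\lambda\in\Lambda^{(l)}}$ are orthonormal bases of $L^2(\R_+\times\R)$ and $L^2(\R_+\times\T)$, respectively, and since the image of an orthonormal basis under a unitary operator on the same Hilbert space is again an orthonormal basis, the two $\mu^{(q)}$-systems are orthonormal bases of the corresponding spaces.

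There is essentially no obstacle in this argument; the step requiring the slightest care is the bookkeeping that confirms the index map $\Lambda^{(l)}\to\Lambda^{(q)}$ induced by the intertwining relation \eqref{l_q_equivalence_1} is a bijection onto $\Lambda^{(q)}$, which is immediate from the explicit definitions of the two lattices.
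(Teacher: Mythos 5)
Your proof is correct and follows essentially the same route as the paper, which likewise deduces the corollary from Theorem \ref{2d_Orthonormal_Bases_D_R_D_T} together with Proposition \ref{l_q_equivalence}. If anything, your version is slightly more complete: the paper cites part (iii), which by itself only transfers the Parseval (reproducing) property, whereas you correctly supply the extra observation --- via the intertwining relation \eqref{l_q_equivalence_1} and the induced bijection $\Lambda^{(l)}\to\Lambda^{(q)}$ --- that the $\mu^{(q)}$-system is the image of an orthonormal basis under the unitary $U$, and hence orthonormal and not merely Parseval.
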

\begin{proof}  Both proofs follow directly out of Theorem \ref{2d_Orthonormal_Bases_D_R_D_T} and Proposition \ref{l_q_equivalence} (iii).
\end{proof}

\section{Unitary equivalence of restrictions to reproducing subgroups of type $\mathcal{E}_2$}

We list representatives, up to conjugation within $Sp(2,\R)$, of all reproducing formulae obtained out of restrictions of the projective metaplectic representation of $Sp(2,\R)$ to two-dimensional, connected Lie subgroups of $\mathcal{E}_2$. Each such reproducing formula is conjugate to exactly one reproducing formula of the list. All reproducing formulae of the list are non-conjugate. We refer the reader to \cite{DM&Co2}, \cite{DM&Co3} for details and a comprehensive presentation of the topic. For the sake of simplicity, we restrict attention to the cases of single connected components of multiplicative actions on the first coordinate, and we choose $\R_+$ for them. The choice of $\R_-$ can be treated in a similar manner. The transition to two components $\R_+ \cup \R_-$ follows in a standard way, see e.g.  \cite{DMNo1}, \cite{DM&Co3}. In the parametrizations of two-dimensional subgroups of $\mathcal{E}_2$ listed below we use $u,t \in \R$. 

\noindent
{\bf I.} 
We have an additional parameter $\alpha \in [-1,0)$,  the Hilbert space $\mathcal{H}$ is $L^2(\R_+\times \R)$, and the action on $f\in \mathcal{H}$ is given by formula
\begin{equation}
\mu^{I_\alpha}_{(u,t)}f(x_1,x_2)=e^{-(2\alpha+1)t/2}e^{\pi i u x_1^2}f\left( e^{-\alpha t}x_1,e^{-(\alpha +1)t} x_2\right),
\label{mu_I_alpha}
\end{equation}
with the corresponding composition rule 
$$
(u',t')\circ (u,t)=\left( u'+e^{-2\alpha t'}u,t'+t \right)
$$ 
and the left Haar measure $-\alpha \,du\,e^{2\alpha t}dt $.

\noindent
{\bf II.} We do not have additional parameters, the  Hilbert space $\mathcal{H}$ is $L^2(\R_+\times \R)$, and the action on $f\in \mathcal{H}$ is given by formula
\begin{equation}
\mu^{II}_{(u,t)}f(x_1,x_2)=e^{-t}e^{\pi i u x_1^2}f\left(e^{-t}(x_1, x_2-t x_1)\right),
\label{mu_II}
\end{equation}
with the corresponding composition rule 
$$
(u',t')\circ (u,t)=\left( u'+e^{-2 t'}u,t'+t \right)
$$
and the left Haar measure $du\,e^{2t}dt $.

In order to describe case III, we introduce standard polar coordinates 
\begin{equation}
\begin{cases}
x_1 =r\cos 2\pi \theta \\
x_2 =r\sin 2\pi \theta
\end{cases},
\label{standard_polar_coordinates}
\end{equation}
$r>0$, $\theta \in [0,1)$, and we interpret the interval $[0,1)$ as the unit circle $\T$.
We define rotations
\begin{equation}
R_\theta=
\left[
\begin{array}{cc}
\cos 2\pi \theta & \sin 2\pi \theta \\
-\sin 2\pi \theta & \cos 2\pi \theta
\end{array}
\right].
\nonumber
\end{equation}
For a function $f\in L^2(\R^2)$, we write $f_{p}$ for its representation in polar coordinates, i.e. $f_{p}(r,\theta)=f(x_1,x_2)$.

\noindent
{\bf III.}  In this case the additional parameter is $\alpha \in [0,\infty)$. The Hilbert space $\mathcal{H}$ is $L^2(\R^2)$, and the action of the representation on $f\in \mathcal{H}$, is
\begin{equation}
\mu^{III_\alpha}_{(u,t)}f(x_1,x_2)=e^{-t}e^{\pi i u \left( x_1^2 + x_2^2\right)}
f\left( e^{-t} R_{-\alpha t}(x_1, x_2) \right),
\label{mu_III_alpha}
\end{equation}
with the corresponding composition rule 
$$
(u',t')\circ (u,t)=\left( u'+e^{-2 t'}u,t'+t \right)
$$ 
and the left Haar measure $du\,e^{2t}dt$.

In order to describe case IV, we introduce hyperbolic  polar coordinates 
\begin{equation}
\begin{cases}
x_1 =r\cosh \theta \\
x_2 =r\sinh \theta
\end{cases},
\label{hyperbolic_polar_coordinates}
\end{equation}
$r,\theta \in \R$, and hyperbolic rotations
\begin{equation}
A_\theta=
\left[
\begin{array}{cc}
\cosh \theta & \sinh \theta \\
\sinh \theta & \cosh \theta
\end{array}
\right].
\nonumber
\end{equation}
For a function $f\in L^2(\R^2)$, we write $f_{h}$ for its representation in hyperbolic polar coordinates, i.e. $f_{h}(r,\theta)=f(x_1,x_2)$.

\noindent
{\bf IV.}  In this case the additional parameter is $\alpha \in [0,\infty)$. The Hilbert space $\mathcal{H}$ is $L^2(\R_+\times \R)$, and the action of the representation on $f\in \mathcal{H}$, is
\begin{equation}
\mu^{IV_\alpha}_{(u,t)}f(x_1,x_2)=e^{-t}e^{\pi i u \left( x_1^2 - x_2^2\right)}
f\left( e^{-t} A_{-\alpha t}(x_1, x_2) \right),
\label{mu_IV_alpha}
\end{equation}
with the corresponding composition rule 
$$
(u',t')\circ (u,t)=\left( u'+e^{-2 t'}u,t'+t \right)
$$ 
and the left Haar measure $du\,e^{2t}dt$.

In what follows we introduce coordinate systems  needed for the reductions of cases I-IV to $\mu^{(q)}$.

\noindent
{\bf I}, $-1 \le \alpha <0$. 
\begin{equation}
\begin{cases}
y_1 =x_1 \\
y_2 =x_1^{-\frac{\alpha +1}{\alpha}}x_2
\end{cases},
\begin{cases}
x_1 =y_1 \\
x_2 =y_1^{\frac{\alpha +1}{\alpha}}y_2
\end{cases},
\text{ Jacobian} = \frac{\partial x_2}{\partial y_2}=y_1^{\frac{\alpha +1}{\alpha}}.
\label{I_alpha_coordinates}
\end{equation}

\noindent
{\bf II}. 
\begin{equation}
\begin{cases}
y_1 =x_1 \\
y_2 ={\frac{x_2 -x_1\log x_1}{x_1}}
\end{cases},
\begin{cases}
x_1 =y_1 \\
x_2 =y_1y_2 +y_1\log y_1
\end{cases},
\text{ Jacobian} = \frac{\partial x_2}{\partial y_2}=y_1.
\label{II_coordinates}
\end{equation}

\noindent
{\bf III}, $\alpha \ge 0$. 
\begin{equation}
\begin{cases}
r'=r \\
\theta' =\theta -\alpha \log r
\end{cases},
\begin{cases}
r =r' \\
\theta =\theta' +\alpha \log r'
\end{cases},
\text{ Jacobian} = \frac{\partial \theta}{\partial \theta'}=1,
\label{III_alpha_coordinates}
\end{equation}
where $(r,\theta)$ are the standard polar coordinates of (\ref{standard_polar_coordinates}).

\noindent
{\bf IV}, $\alpha \ge 0$. 
\begin{equation}
\begin{cases}
r'=r \\
\theta' =\theta -\alpha \log r
\end{cases},
\begin{cases}
r =r' \\
\theta =\theta' +\alpha \log r'
\end{cases},
\text{ Jacobian} = \frac{\partial \theta}{\partial \theta'}=1,
\label{IV_alpha_coordinates}
\end{equation}
where $(r,\theta)$ are the hyperbolic polar coordinates of (\ref{hyperbolic_polar_coordinates}).

For ${\mathcal J}=I_\alpha,II,III_\alpha,IV_\alpha$ we define the corresponding lattice $\Lambda^{\mathcal J}$ as the image of $\Lambda^{(q)}$ via the inverse of $(u,t)\mapsto (u,e^{-\alpha t})$ for ${\mathcal J}=I_\alpha$, i.e. it is
$\left\{ \left(2^{k+1}m, -\frac{\log 2}{2\alpha}k \right)\right\}_{m,k\in \Z}$, and via the inverse of  $(u,t)\mapsto (u,e^{- t})$ for ${\mathcal J}=II,III_\alpha,IV_\alpha$, i.e. it is $\left\{ \left(2^{k+1}m, -\frac{\log 2}{2}k \right)\right\}_{m,k\in \Z}$.

\begin{thm}\label{I-IV_equivalence}
Let $\mu^{(q)}$ be defined in (\ref{mu_q}), with $Y=\R$ in cases I,II,IV, and $Y=\T$ in case III. In all cases $\kappa$ is the Lebesgue measure.

\noindent
(i). In each case $f_c$ expresses $f$ in the adequate coordinate system. \newline
I. Let us define $U^{I_\alpha}f(y_1,y_2)=y_1^{\frac{\alpha +1}{2\alpha}}f_c(y_1,y_2)$, where $f_c$ is the expression of $f$ in coordinates  (\ref{I_alpha_coordinates}). Then $U^{I_\alpha}$ is unitary and we have the following intertwining property
$$
U^{I_\alpha}\mu^{I_\alpha}_{(u,t)}=\mu^{(q)}_{\left( u,e^{-\alpha t}\right)}U^{I_\alpha}.
$$
II. Let us define $U^{II}f(y_1,y_2)=y_1^{\frac 12}f_c(y_1,y_2)$, where $f_c$ is the expression of $f$ in coordinates  (\ref{II_coordinates}). Then $U^{II}$ is unitary and  we have the following intertwining property
$$
U^{II}\mu^{II}_{(u,t)}=\mu^{(q)}_{\left( u,e^{- t}\right)}U^{II}.
$$
III. Let us define $U^{III_\alpha}f(r',\theta')={\left( r'\right)}^{\frac 12}f_c(r',\theta')$, where $f_c$ is the expression of $f_p$ in coordinates  (\ref{III_alpha_coordinates}), and $f_p$ expresses $f$ in standard polar coordinates (\ref{standard_polar_coordinates}). Then $U^{III_\alpha}$ is unitary and  we have the following intertwining property
$$
U^{III_\alpha}\mu^{III_\alpha}_{(u,t)}=\mu^{(q)}_{\left( u,e^{-t}\right)}U^{III_\alpha}.
$$
IV. Let us define $U^{IV_\alpha}f(r',\theta')={\left( r'\right)}^{\frac 12}f_c(r',\theta')$, where $f_c$ is the expression of $f_h$ in coordinates  (\ref{IV_alpha_coordinates}), and $f_h$ expresses $f$ in hyperbolic polar coordinates (\ref{hyperbolic_polar_coordinates}). Then $U^{IV_\alpha}$ is unitary and  we have the following intertwining property
$$
U^{IV_\alpha}\mu^{IV_\alpha}_{(u,t)}=\mu^{(q)}_{\left( u,e^{-t}\right)}U^{IV_\alpha}.
$$
(ii). In each of the cases ${\mathcal J}=I_\alpha,II,III_\alpha,IV_\alpha$ the system 
$\left\{\mu^{\mathcal J}_{(u,t)}\psi \right \}_{u,t\in \R}$,
with the Hilbert space and  the left Haar measure described in (\ref{mu_I_alpha}), (\ref{mu_II}), (\ref{mu_III_alpha}), (\ref{mu_IV_alpha}), respectively,  is reproducing,  if and only if, the system $\left\{\mu^{(q)}_{(v,s)}U^{\mathcal J}\psi \right \}_
{\substack{v\in \R \\ s>0}}$,
with the parameter measure $\frac{dv\,ds}{s^3}$, is reproducing in $L^2(\R_+ \times Y)$, $Y=\R$ in cases I,II,IV, and $Y=\T$ in case III.

\noindent
(iii). In each of the cases ${\mathcal J}=I_\alpha,II,III_\alpha,IV_\alpha$ the system 
$\left\{\mu^{\mathcal J}_{\lambda}\psi \right \}_{\lambda\in \Lambda^{\mathcal J}}$,
with the Hilbert space described in (\ref{mu_I_alpha}), (\ref{mu_II}), (\ref{mu_III_alpha}), (\ref{mu_IV_alpha}), respectively, is reproducing,  if and only if, the system $\left\{\mu^{(q)}_{\lambda}U^{\mathcal J}\psi \right \}_
{\lambda \in \Lambda^{(q)}}$ is reproducing in $L^2(\R_+ \times Y)$, $Y=\R$ in cases I,II,IV, and $Y=\T$ in case III. In all cases the parameter measure is the counting measure.
\end{thm}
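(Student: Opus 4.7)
The plan is to tackle (i) case by case by direct computation, and then derive (ii) and (iii) from (i) by combining the intertwining relations with changes of variables in the Haar (or counting) measure. All four cases of (i) follow the same schema, so I would present them as four short parallel sub-arguments.

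For the unitarity of $U^{\mathcal J}$ in (i), I would carry out the change of variables $x\mapsto y$ given in (\ref{I_alpha_coordinates})--(\ref{IV_alpha_coordinates}) (preceded, in cases III and IV, by the polar or hyperbolic polar substitution) and observe that the density factors $y_1^{(\alpha+1)/(2\alpha)}$, $y_1^{1/2}$, $(r')^{1/2}$ are precisely the square roots of the relevant Jacobians; hence $\|U^{\mathcal J}f\|_{L^2(\R_+\times Y)}^2=\|f\|_{\mathcal H}^2$, and surjectivity is given by the inverse substitution. For the intertwining relation, the key point is that each coordinate system is engineered so that the action of $\mu^{\mathcal J}_{(u,t)}$ fixes the second coordinate ($y_2$ or $\theta'$) and dilates the first coordinate by $e^{-\alpha t}$ (case I) or $e^{-t}$ (cases II--IV), while the quadratic phase $e^{\pi i u x_1^2}$ equals $e^{\pi i u y_1^2}$ since $y_1=x_1$. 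Substituting into the new coordinates and collecting density prefactors, the identity $U^{\mathcal J}\mu^{\mathcal J}_{(u,t)}f=\mu^{(q)}_{(u,s(t))}U^{\mathcal J}f$ reduces to an algebraic balance of exponentials (e.g.\ in case I, $e^{-(2\alpha+1)t/2}\cdot y_1^{(\alpha+1)/(2\alpha)}= e^{-\alpha t/2}\cdot (e^{-\alpha t}y_1)^{(\alpha+1)/(2\alpha)}$).

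Granted (i), part (ii) follows by writing $\langle f,\mu^{\mathcal J}_{(u,t)}\psi\rangle=\langle U^{\mathcal J}f,\mu^{(q)}_{(u,s(t))}U^{\mathcal J}\psi\rangle$ inside the polarized identity (\ref{reproducing_system_polarization}) and performing the change of variables $(u,t)\mapsto(v,s)=(u,s(t))$. In each case the left Haar density transforms cleanly into $dv\,ds/s^3$: for instance in case I, with $s=e^{-\alpha t}$ one has $dt=-ds/(\alpha s)$ and $e^{2\alpha t}=s^{-2}$, which turns $-\alpha\,du\,e^{2\alpha t}dt$ into $du\,ds/s^3$; cases II--IV are analogous one-line Jacobian computations. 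Unitarity and surjectivity of $U^{\mathcal J}$ then convert the two reproducing identities into each other. Part (iii) is identical with integrals replaced by sums: by construction, $(u,t)\mapsto(u,s(t))$ sends $\Lambda^{\mathcal J}$ bijectively onto $\Lambda^{(q)}$, so the counting measures transport exactly.

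I expect the main obstacle to be the intertwining calculation in cases III and IV, where the combined rotation (or hyperbolic rotation) and radial rescaling must mesh with the logarithmic shift $\theta'=\theta-\alpha\log r$. The crux is the cancellation $(\theta\mp\alpha t)-\alpha\log(e^{-t}r)=\theta-\alpha\log r$, which holds because the $\alpha t$ from the angular part of the group action exactly cancels the $\alpha t$ produced by applying $\log$ to the rescaled radius, so that $\theta'$ is preserved while $r'$ simply rescales by $e^{-t}$; once this cancellation is exhibited, cases III and IV collapse to the same pattern as cases I and II.
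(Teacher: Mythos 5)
Your proposal is correct and follows essentially the same route as the paper: part (i) via the coordinate changes, with the prefactors $y_1^{(\alpha+1)/(2\alpha)}$, $y_1^{1/2}$, $(r')^{1/2}$ recognized as square roots of the Jacobians and with exactly the cancellation in the second coordinate that the paper records in its Remark, and parts (ii)--(iii) by inserting the intertwining relation into the polarized identity and transporting the Haar (resp.\ counting) measure via $s=e^{-\alpha t}$ or $s=e^{-t}$. The only cosmetic difference is that the paper verifies the intertwining weakly, through $\left\langle f,\mu^{\mathcal J}_{(u,t)}g\right\rangle=\left\langle U^{\mathcal J}f,\mu^{(q)}_{(u,s)}U^{\mathcal J}g\right\rangle$, rather than by the pointwise computation you propose.
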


\begin{proof} We begin with the proof of (i). Verification of the fact that operators $U^{\mathcal J}$ are unitary is straightforward in all of the cases. \newline
I. In order to prove the intertwining property of I we substitute $e^t$ by $s$, next we introduce coordinates (\ref{I_alpha_coordinates}), and then we substitute $s^{-\alpha}$ by $r$,
\begin{align}
\left\langle f,\mu^{I_\alpha}_{(u,t)}g \right\rangle &=\int_{\R_+ \times \R}f(x_1,x_2)e^{-(2\alpha +1)t/2}\overline{g}\left(e^{-\alpha t}x_1,e^{-(\alpha +1)t}x_2\right) e^{-\pi i ux_1^2}dx_1\,dx_2 \nonumber\\
&=\int_{\R_+  \times \R}f(x_1,x_2)s^{-\frac{2\alpha +1}{2}}\overline{g}\left(s^{-\alpha }x_1,s^{-(\alpha +1)}x_2\right) e^{-\pi i ux_1^2}dx_1\,dx_2 \nonumber \\
&=\int_{\R_+ \times \R}f_c(y_1,y_2)s^{-\frac{2\alpha +1}{2}}\overline{g_c}\left(s^{-\alpha }y_1,y_2\right) y_1^\frac{\alpha+1}{\alpha}e^{-\pi i uy_1^2}dy_1\,dy_2 \nonumber \\
&=\int_{\R_+ \times \R}y_1^\frac{\alpha+1}{2\alpha}f_c(y_1,y_2)r^{\frac 12}\left(ry_1\right)^\frac{\alpha+1}{2\alpha}\overline{g_c}(ry_1,y_2) e^{-\pi i uy_1^2}dy_1\,dy_2 \nonumber \\
&=\left\langle U^{I_\alpha}f,\mu^{(q)}_{(u,r)}U^{I_\alpha}g\right\rangle 
=\left\langle f, \left( U^{I_\alpha}\right)^{-1} \mu^{(q)}_{(u,r)}U^{I_\alpha}g\right\rangle,
\nonumber
\end{align}
with $r=e^{-\alpha t}$. We conclude (i) I
$$
U^{I_\alpha}\mu^{I_\alpha}_{(u,t)}=\mu^{(q)}_{\left( u,e^{-\alpha t}\right)}U^{I_\alpha}.
$$

\noindent
II.  In order to prove the intertwining property of II we substitute $e^{-t}$ by $s$, and then we introduce coordinates (\ref{II_coordinates}),
\begin{align}
\left\langle f,\mu^{II}_{(u,t)}g \right\rangle &=\int_{\R_+ \times \R}f(x_1,x_2)e^{-t}\overline{g}\left(e^{-t}(x_1,x_2-tx_1)\right) 
e^{-\pi i ux_1^2}dx_1\,dx_2 \nonumber\\
&=\int_{\R_+ \times \R}f(x_1,x_2)s\overline{g}\left(sx_1,s(x_2+x_1 \log s )\right) e^{-\pi i ux_1^2}dx_1\,dx_2 \nonumber \\
&=\int_{\R_+ \times \R}f_c(y_1,y_2)s\overline{g_c}(sy_1,y_2) y_1e^{-\pi i uy_1^2}dy_1\,dy_2 \nonumber \\
&=\int_{\R_+ \times \R}y_1^\frac{1}{2}f_c(y_1,y_2)s^{\frac 12}(sy_1)^\frac{1}{2}\overline{g_c}(sy_1,y_2) e^{-\pi i uy_1^2}dy_1\,dy_2 \nonumber \\
&=\left\langle U^{II}f,\mu^{(q)}_{(u,s)}U^{II}g\right\rangle =\left\langle f, \left( U^{II}\right) ^{-1}\mu^{(q)}_{(u,s)}U^{II}g\right\rangle, \nonumber
\end{align}
with $s=e^{- t}$. Therefore (i) II follows
$$
U^{II}\mu^{II}_{(u,t)}=\mu^{(q)}_{\left( u,e^{- t}\right)}U^{II}.
$$

\noindent
III.  In order to prove the intertwining property of III we substitute $e^{-t}$ by $s$, next we introduce standard polar coordinates, and then we introduce coordinates (\ref{III_alpha_coordinates}),
\begin{align}
\left\langle f,\mu^{III_\alpha}_{(u,t)}g \right\rangle &=\int_{\R^2} f(x_1,x_2)e^{-t}\overline{g}\left(e^{-t}R_{-\alpha t}(x_1,x_2)\right) e^{-\pi i u\left(x_1^2 + x_2^2\right)}dx_1\,dx_2 \nonumber\\
&=\int_{\R^2} f(x_1,x_2)s\overline{g}\left(sR_{\alpha \log s}(x_1,x_2)\right) e^{-\pi i u \left(x_1^2 + x_2^2\right)}dx_1\,dx_2 \nonumber \\
&=\int_{\R_+\times \T}f_p(r,\theta)s\overline{g_p}(sr,\theta + \alpha \log s) r e^{-\pi i u r^2}dr\,d\theta \nonumber \\
&=\int_{\R_+\times \T}\left(r'\right)^\frac{1}{2}f_c(r',\theta')s^{\frac 12}\left(sr'\right)^\frac{1}{2}\overline{g_c}\left(sr',\theta'\right) e^{-\pi i u(r')^2}dr'\,d\theta' \nonumber \\
&=\left\langle U^{III_\alpha}f,\mu^{(q)}_{(u,s)}U^{III_\alpha}g\right\rangle
=\left\langle f,\left( U^{III_\alpha}\right)^{-1} \mu^{(q)}_{(u,s)}U^{III_\alpha}g\right\rangle, \nonumber
\end{align}
with $s=e^{- t}$. We obtain (i) III
$$
U^{III_\alpha}\mu^{III_\alpha}_{(u,t)}=\mu^{(q)}_{\left( u,e^{-t}\right)}U^{III_\alpha}.
$$ 

\noindent
IV.  In order to prove the intertwining property of IV we substitute $e^{-t}$ by $s$, next we introduce hyperbolic polar coordinates, and then we introduce coordinates (\ref{IV_alpha_coordinates}),
\begin{align}
\left\langle f,\mu^{IV_\alpha}_{(u,t)}g \right\rangle &=\int_{\R^2} f(x_1,x_2)e^{-t}\overline{g}\left(e^{-t}A_{-\alpha t}(x_1,x_2)\right) e^{-\pi i u\left(x_1^2 - x_2^2\right)}dx_1\,dx_2 \nonumber\\
&=\int_{\R^2} f(x_1,x_2)s\overline{g}\left(sA_{\alpha \log s}(x_1,x_2)\right) e^{-\pi i u \left(x_1^2 - x_2^2\right)}dx_1\,dx_2 \nonumber \\
&=\int_{\R_+\times \R}f_h(r,\theta)s\overline{g_h}(sr,\theta + \alpha \log s) r e^{-\pi i u r^2}dr\,d\theta \nonumber \\
&=\int_{\R_+\times \R}\left(r'\right)^\frac{1}{2}f_c(r',\theta')s^{\frac 12}\left(sr'\right)^\frac{1}{2}\overline{g_c}\left(sr',\theta'\right) e^{-\pi i u(r')^2}dr'\,d\theta' \nonumber \\
&=\left\langle U^{IV_\alpha}f,\mu^{(q)}_{(u,s)}U^{IV_\alpha}g \right\rangle
=\left\langle f, \left( U^{IV_\alpha}\right) ^{-1}\mu^{(q)}_{(u,s)}U^{IV_\alpha}g \right\rangle , \nonumber
\end{align}
with $s=e^{- t}$. Therefore we get (i) IV
$$
U^{IV_\alpha}\mu^{IV_\alpha}_{(u,t)}=\mu^{(q)}_{\left( u,e^{-t}\right)}U^{IV_\alpha}.
$$

\noindent
We apply (i) in order to prove (ii). In case I substitution of $e^{-\alpha t}$ by $s$ gives 
\begin{align}
-\alpha \int_{\R^2} \left| \left\langle f,\mu^{I_\alpha}_{(u,t)}g \right\rangle\right|^2 du\,e^{2\alpha t}dt 
&=-\alpha \int_{\R^2} \left|\left\langle U^{I_\alpha}f,\mu^{(q)}_{(u,e^{-\alpha t})}U^{I_\alpha}g\right\rangle \right|^2 du\,e^{2\alpha t}dt \nonumber\\
&=\int_{\R^2_+} \left|\left\langle U^{I_\alpha}f,\mu^{(q)}_{(u,s)}U^{I_\alpha}g\right\rangle \right|^2 \frac {du\,ds}{s^3}.
\nonumber
\end{align}
In cases II, III, IV we substitute $e^{-t}$ by $s$. For ${\mathcal J}=II, III_\alpha, \,IV_\alpha$ we have
\begin{align}
\int_{\R^2} \left| \left\langle f,\mu^{\mathcal J}_{(u,t)}g \right\rangle\right|^2 du\,e^{2t}dt 
&=\int_{\R^2} \left|\left\langle U^{\mathcal J}f,\mu^{(q)}_{(u,e^{-t})}U^{\mathcal J}g\right\rangle \right|^2 du\,e^{2t}dt \nonumber\\
&=\int_{\R^2_+} \left|\left\langle U^{\mathcal J}f,\mu^{(q)}_{(u,s)}U^{\mathcal J}g\right\rangle \right|^2 \frac {du\,ds}{s^3}.
\nonumber
\end{align}
Polarization formula and the fact that operators $U^{\mathcal J}$, ${\mathcal J}=I_\alpha,II,III_\alpha,IV_\alpha$, are unitary finish the proof in all cases.  The proof of (iii) follows in the same way as (ii), with integrals substituted by sums. 
\end{proof}

\begin{rem} There is a clear intuitive explanation of the choices of coordinate systems (\ref{I_alpha_coordinates}), (\ref{II_coordinates}), (\ref{III_alpha_coordinates}), (\ref{IV_alpha_coordinates}). The general guideline for the choices is: remove the effect of dilations from the second coordinate. We present relevant calculations for all of the cases I-IV. Left hand sides refer to the values of arguments in the proof of (i) of Theorem \ref{I-IV_equivalence} occurring right before the main substitution.  

\noindent
I. 
$$\left( s^{-\alpha}x_1\right)^{-\frac{\alpha +1}{\alpha}}s^{-(\alpha +1)}x_2=x_1^{-\frac{\alpha +1}{\alpha}}x_2,$$

\noindent
II. 
$$\frac{sx_2 + sx_1\log s -sx_1\log(sx_1)}{sx_1}=\frac{sx_2 + sx_1\log x_1}{sx_1}=\frac{x_2 + x_1\log x_1}{x_1},$$

\noindent
III.
$$\theta + \alpha\log s - \alpha \log (sr)= \theta - \alpha\log r,$$

\noindent
IV.
$$\theta + \alpha\log s - \alpha \log (sr)= \theta - \alpha\log r.$$
\end{rem}

\begin{cor}\label{Reproducing_Formula_iff_1d_Isometry_all_cases}
 In each of the cases ${\mathcal J}=I_\alpha,II,III_\alpha,IV_\alpha$, the system 
$\left\{\mu^{\mathcal J}_{(u,t)}\psi \right \}_{u,t\in \R}$,
with the Hilbert space and  the parameter measure being the left Haar measure, both described in (\ref{mu_I_alpha}), (\ref{mu_II}), (\ref{mu_III_alpha}), (\ref{mu_IV_alpha}), respectively, is reproducing if and only if the corresponding integral operator of the following table preserves inner products. 
\begin{equation*}
\begin{matrix}
\text{Subgroup Type}  & \text{Integral Kernel}  &  \text{Domain}  &  \text{Codomain}  \\ 
\text{ } & \text{ } & \text{ } & \text{ } \\
\text{I},\alpha \in [-1,0) 
& 
\psi\left(r, r^{\frac{\alpha +1}{\alpha}}y\right)
&
L^2(\R, dy)
& L^2\left(\R_+, r^{\frac{1-\alpha}{\alpha}}dr\right) \\
\text{ } & \text{ } & \text{ } & \text{ } \\
\text{II} 
&
\psi\left(r, ry+r\log r\right)
& L^2(\R, dy)
&  L^2\left(\R_+, \frac{dr}{r}\right) \\
\text{ } & \text{ } & \text{ } & \text{ } \\
\text{III},\alpha \in [0,\infty) 
&
\psi_p\left(r, y+\alpha \log r\right)
&  L^2(\T, dy)
&  L^2\left(\R_+, \frac{dr}{r}\right) \\
\text{ } & \text{ } & \text{ } & \text{ } \\
\text{IV},\alpha \in [0,\infty) 
&
\psi_h\left(r, y+\alpha \log r\right)
&    L^2(\R, dy)
&  L^2\left(\R_+, \frac{dr}{r}\right)
\end{matrix}
\end{equation*}
By $\psi_p$, $\psi_h$ we denote the representations of $\psi$ in polar and hyperbolic polar coordinates, respectively.
\end{cor}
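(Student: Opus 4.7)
The plan is to chain the three reductions already established in the paper and then absorb a multiplicative factor into the measure. First, I would use Theorem~\ref{I-IV_equivalence}~(ii) to convert the reproducing condition for $\{\mu^{\mathcal J}_{(u,t)}\psi\}_{u,t\in\R}$ into the reproducing condition for $\{\mu^{(q)}_{(v,s)}U^{\mathcal J}\psi\}_{v\in\R,\,s>0}$ on $L^2(\R_+\times Y)$, with $Y=\R$ in cases I, II, IV and $Y=\T$ in case III. Then I would apply Corollary~\ref{Reproducing_Formula_iff_1d_Isometry_q_case} to the generating function $\psi':=U^{\mathcal J}\psi$, obtaining: the system is reproducing iff the map
\begin{equation*}
g\mapsto \int_Y \overline{(U^{\mathcal J}\psi)(r,y)}\,g(y)\,d\kappa(y)
\end{equation*}
from $L^2(Y,d\kappa(y))$ into $L^2(\R_+,\frac{dr}{r^2})$ preserves inner products.

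Next I would insert the explicit forms of $U^{\mathcal J}\psi$ coming from the definitions of $U^{\mathcal J}$ and the coordinate changes (\ref{I_alpha_coordinates})--(\ref{IV_alpha_coordinates}). For instance, in case I one has $(U^{I_\alpha}\psi)(r,y)=r^{\frac{\alpha+1}{2\alpha}}\psi\bigl(r,\,r^{\frac{\alpha+1}{\alpha}}y\bigr)$; in case II, $(U^{II}\psi)(r,y)=r^{1/2}\psi(r,\,ry+r\log r)$; and in cases III, IV one gets $r^{1/2}\psi_p(r,\,y+\alpha\log r)$ and $r^{1/2}\psi_h(r,\,y+\alpha\log r)$ respectively. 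The multiplicative prefactor $r^{\beta}$ depends only on $r$, so pulling it out of the $Y$-integral turns the condition into an isometry condition with the new kernel (the one in the table) but with the measure $\frac{dr}{r^2}$ replaced by $r^{2\beta}\frac{dr}{r^2}$.

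The final step is a direct computation showing that $r^{2\beta}\frac{dr}{r^2}$ reduces to the measure listed in the table. Explicitly, for case I,
\begin{equation*}
r^{2\cdot\frac{\alpha+1}{2\alpha}}\,\frac{dr}{r^2}=r^{\frac{\alpha+1}{\alpha}-2}\,dr=r^{\frac{1-\alpha}{\alpha}}\,dr,
\end{equation*}
and for cases II, III, IV the exponent $\beta=1/2$ yields $r\cdot\frac{dr}{r^2}=\frac{dr}{r}$. Each of these matches the codomain measure in the corollary. Thus the isometry condition of Corollary~\ref{Reproducing_Formula_iff_1d_Isometry_q_case} applied to $U^{\mathcal J}\psi$ is literally the isometry condition recorded in the table, and the chain of equivalences closes.

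There is no real obstacle here beyond bookkeeping: the substance has already been done in Theorem~\ref{I-IV_equivalence} and Corollary~\ref{Reproducing_Formula_iff_1d_Isometry_q_case}. The only point requiring care is keeping track of the Jacobians and the factor $r^\beta$ produced by $U^{\mathcal J}$, and verifying that they combine with the canonical measure $\frac{dr}{r^2}$ to produce exactly the measure given in each row of the table.
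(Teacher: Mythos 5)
Your proposal is correct and follows exactly the route the paper takes: the paper's own proof is the one-line observation that the corollary is a direct consequence of Theorem~\ref{I-IV_equivalence}~(ii) and Corollary~\ref{Reproducing_Formula_iff_1d_Isometry_q_case}, and your chain of reductions, together with the bookkeeping that absorbs the prefactor $r^{\beta}$ of $U^{\mathcal J}$ into the measure $\frac{dr}{r^2}$, is precisely the unpacking of that statement. The explicit kernel and measure computations you give (e.g.\ $r^{\frac{\alpha+1}{\alpha}-2}\,dr=r^{\frac{1-\alpha}{\alpha}}\,dr$ in case I) all check out against the table.
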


\begin{proof} The result is a direct consequence of Theorem \ref{I-IV_equivalence} (ii) and Corollary \ref{Reproducing_Formula_iff_1d_Isometry_q_case}.
\end{proof}

\begin{cor}\label{Reproducing_Discrete_System_iff_1d_Isometry_all_cases}
 In each of the cases ${\mathcal J}=I_\alpha,II,III_\alpha,IV_\alpha$, the system 
$\left\{\mu^{\mathcal J}_{\lambda}\psi \right \}_{\lambda\in \Lambda^{\mathcal J}}$,
with the Hilbert space described in (\ref{mu_I_alpha}), (\ref{mu_II}), (\ref{mu_III_alpha}), (\ref{mu_IV_alpha}), respectively,  is a Parseval frame,  if and only if the corresponding integral operator of the following table preserves inner products for almost every $r\in \R_+$. Domains of the operators are the same as in Corollary \ref{Reproducing_Formula_iff_1d_Isometry_all_cases}. Codomains are weighted $l^2(\Z)$ with the indicated weight.
\begin{equation*}
\begin{matrix}
\text{Subgroup Type}  & \text{Integral Kernel}  &  \text{Weight}  \\ 
\text{ } & \text{ } & \text{ } \\
\text{I},\alpha \in [-1,0) 
& 
2^{-1/2}r^{\frac{1}{2\alpha}}\psi\left(2^{k/2}r, \left(2^{k/2}r\right)^{\frac{\alpha +1}{\alpha}}y\right)
& 2^{\frac{k}{2\alpha}} \\
\text{ } & \text{ } & \text{ } & \text{ } \\
\text{II} 
&
2^{-1/2}\psi\left(2^{k/2}r, 2^{k/2}ry+2^{k/2}r \log \left(2^{k/2}r\right)\right)
&  1 \\
\text{ } & \text{ } & \text{ } \\
\text{III},\alpha \in [0,\infty) 
&
2^{-1/2}\psi_p\left(2^{k/2}r, y+\alpha \log \left(2^{k/2}r\right) \right)
&  1 \\
\text{ } & \text{ } & \text{ } \\
\text{IV},\alpha \in [0,\infty) 
&
2^{-1/2}\psi_h\left(2^{k/2}r, y+\alpha \log \left(2^{k/2}r\right) \right)
&  1
\end{matrix}
\end{equation*}
By $\psi_p$, $\psi_h$ we denote the representations of $\psi$ in polar and hyperbolic polar coordinates.
We assume that for almost every $y \in Y$, where $Y=\R$ for ${\mathcal J}=I_\alpha,II,IV_\alpha$, and $Y=\T$ for ${\mathcal J}=III_\alpha$,  $\text{ess-supp}\, \psi(\cdot,y)\subset [0,1]$ in cases  ${\mathcal J}=I_\alpha,II$, and 
$\text{ess-supp}\, \psi_p(\cdot,y)\subset [0,1]$, $\text{ess-supp}\, \psi_h(\cdot,y)\subset [0,1]$ in cases  ${\mathcal J}=III_\alpha,IV_\alpha$, respectively.
\end{cor}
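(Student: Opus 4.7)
The plan is to combine Theorem \ref{I-IV_equivalence}(iii) with Corollary \ref{Reproducing_Discrete_System_iff_1d_Isometry_q_case}, and then rewrite the resulting isometry condition on $U^{\mathcal{J}}\psi$ in terms of $\psi$, $\psi_p$, or $\psi_h$, depending on the case.

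First I would invoke Theorem \ref{I-IV_equivalence}(iii) to replace the discrete reproducing property of $\{\mu^{\mathcal{J}}_\lambda \psi\}_{\lambda \in \Lambda^{\mathcal{J}}}$ by the reproducing property of $\{\mu^{(q)}_\lambda U^{\mathcal{J}}\psi\}_{\lambda \in \Lambda^{(q)}}$ in $L^2(\R_+ \times Y)$. Before applying Corollary \ref{Reproducing_Discrete_System_iff_1d_Isometry_q_case} to the latter system, I would check that the essential support assumption transfers correctly: inspecting formulas (\ref{I_alpha_coordinates})--(\ref{IV_alpha_coordinates}) one sees that each $U^{\mathcal{J}}$ acts on the first coordinate only by multiplication by a positive power of $y_1$ or $r'$, and leaves the first coordinate unchanged under the coordinate change, so the hypothesis $\text{ess-supp}\,\psi(\cdot,y)\subset [0,1]$ (respectively $\text{ess-supp}\,\psi_p(\cdot,y)\subset[0,1]$, $\text{ess-supp}\,\psi_h(\cdot,y)\subset[0,1]$) translates directly into $\text{ess-supp}\,U^{\mathcal{J}}\psi(\cdot,y)\subset[0,1]$.

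Next I would apply Corollary \ref{Reproducing_Discrete_System_iff_1d_Isometry_q_case} to $U^{\mathcal{J}}\psi$. This gives that the original system is a Parseval frame if and only if, for almost every $r\in \R_+$, the map
\[
f\mapsto \int_Y \overline{U^{\mathcal{J}}\psi\bigl(2^{k/2}r,y\bigr)}\,f(y)\,d\kappa(y)
\]
sends $L^2(Y,d\kappa)$ isometrically into the weighted $\ell^2(\Z)$ with weight $\tfrac{1}{2r}\,2^{-k/2}$. The last step is to substitute the explicit formula for $U^{\mathcal{J}}\psi$ from Theorem \ref{I-IV_equivalence}(i) and absorb the resulting Jacobian factors and $r^{1/2}$ (or $r'{}^{1/2}$) factor into a redefinition of kernel and weight.

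Case by case, this is a short algebraic bookkeeping. For case I, $U^{I_\alpha}\psi(2^{k/2}r,y)=(2^{k/2}r)^{(\alpha+1)/(2\alpha)}\,\psi\bigl(2^{k/2}r,(2^{k/2}r)^{(\alpha+1)/\alpha}y\bigr)$, so the factor $\bigl|\cdot\bigr|^2\cdot \tfrac{1}{2r}2^{-k/2}$ regroups as $\bigl(2^{-1/2}r^{1/(2\alpha)}\bigr)^2\cdot 2^{k/(2\alpha)}$ times $|\psi(\cdots)|^2$, matching the first row of the table. For case II the exponent $(2^{k/2}r)^{1/2}$ from $U^{II}\psi$ cancels the $2^{-k/2}(2r)^{-1}\cdot r$ factor so the weight is $1$ and the normalization is $2^{-1/2}$. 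Cases III and IV are identical in form, with $\psi$ replaced by $\psi_p$ and $\psi_h$ respectively, and the $r$ coming from the polar/hyperbolic Jacobian cancelling $1/(2r)$ exactly as in case II.

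The routine but slightly error-prone part is keeping track of the powers of $2^{k/2}$ and $r$ when pulling the $r^{1/2}$ (or $r^{(\alpha+1)/(2\alpha)}$) prefactor out of $|U^{\mathcal{J}}\psi(2^{k/2}r,y)|^2$ and combining with $\tfrac{1}{2r}\,2^{-k/2}$; I expect no conceptual obstacle, only careful exponent arithmetic, which in each case produces the kernel and weight displayed in the statement.
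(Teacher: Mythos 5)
Your proposal is correct and follows exactly the route the paper takes: the paper's proof is the one-line observation that the corollary is a direct consequence of Theorem \ref{I-IV_equivalence}(iii) and Corollary \ref{Reproducing_Discrete_System_iff_1d_Isometry_q_case}. You additionally spell out the transfer of the essential-support hypothesis and the exponent bookkeeping that produces the table (which checks out, e.g.\ in case I the factor $\tfrac{1}{2r}2^{-k/2}\bigl(2^{k/2}r\bigr)^{\frac{\alpha+1}{\alpha}}$ does regroup as $\bigl(2^{-1/2}r^{\frac{1}{2\alpha}}\bigr)^2 2^{\frac{k}{2\alpha}}$), details the paper leaves implicit.
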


\begin{proof} The result is a direct consequence of Theorem \ref{I-IV_equivalence} (iii) and Corollary \ref{Reproducing_Discrete_System_iff_1d_Isometry_q_case}.
\end{proof}

We define $\psi^{{\mathcal J}, D_\R}=\left( U^{\mathcal J}\right)^{-1}U\psi^{D_\R}$, for $\,{\mathcal J}=I_\alpha,II,IV_\alpha$, and $\psi^{{\mathcal J}, D_\T}=\left( U^{\mathcal J}\right)^{-1}U\psi^{D_\T}$, for $\,{\mathcal J}=III_\alpha$, where $\psi^{D_\R}$, $\psi^{D_\T}$ are defined in (\ref{2d_generating_function_R}), (\ref{2d_generating_function_T}), respectively, and $U$ is the unitary map of Proposition \ref{l_q_equivalence}. 

\begin{cor}\label{2d_Reproducing_System_D_R_D_T_J}
 Systems $\left\{ c_f^{-1}\mu^{\mathcal J}_{(u,t)}\psi^{{\mathcal J}, D_\R}\right \}_{u,t\in \R}$, 
$\,{\mathcal J}=I_\alpha,II,IV_\alpha$, $\left\{ c_f^{-1}\mu^{\mathcal J}_{(u,t)}\psi^{{\mathcal J}, D_\T} \right \}_{u,t\in \R}$, $\,{\mathcal J}=III_\alpha$, where $c_f=\left( \log 2\right)^{1/2}$,
with the Hilbert spaces and  the parameter measures the same as in Corollary \ref{Reproducing_Formula_iff_1d_Isometry_all_cases}, are reproducing.
\end{cor}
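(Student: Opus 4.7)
The proof is essentially a two-step composition of earlier equivalences, so the plan is to simply identify and chain them correctly, taking care that the constant $c_f^{-1}$ and the factor $(U^{\mathcal J})^{-1}$ present in the definition of $\psi^{{\mathcal J},D_\R}$ and $\psi^{{\mathcal J},D_\T}$ line up as expected.

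First, for ${\mathcal J}=I_\alpha,II,IV_\alpha$ I would set $\psi=\psi^{{\mathcal J},D_\R}=\bigl(U^{\mathcal J}\bigr)^{-1}U\psi^{D_\R}$ in part (ii) of Theorem \ref{I-IV_equivalence}. By that theorem, the system $\bigl\{c_f^{-1}\mu^{\mathcal J}_{(u,t)}\psi^{{\mathcal J},D_\R}\bigr\}_{u,t\in\R}$, with parameter measure the left Haar measure appearing in (\ref{mu_I_alpha}), (\ref{mu_II}) or (\ref{mu_IV_alpha}), is reproducing in the corresponding Hilbert space if and only if the transformed system $\bigl\{c_f^{-1}\mu^{(q)}_{(v,s)}U^{\mathcal J}\psi^{{\mathcal J},D_\R}\bigr\}_{v\in\R,\,s>0}$, with parameter measure $\frac{dv\,ds}{s^3}$, is reproducing in $L^2(\R_+\times\R)$. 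Here the scalar $c_f^{-1}$ passes through the equivalence unchanged, since both sides of the norm identity (\ref{reproducing_system_polarization}) scale quadratically in the generator.

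Next, by the very definition of $\psi^{{\mathcal J},D_\R}$, we have $U^{\mathcal J}\psi^{{\mathcal J},D_\R}=U^{\mathcal J}\bigl(U^{\mathcal J}\bigr)^{-1}U\psi^{D_\R}=U\psi^{D_\R}$, so the transferred system is precisely $\bigl\{c_f^{-1}\mu^{(q)}_{(v,s)}U\psi^{D_\R}\bigr\}_{v\in\R,\,s>0}$ with the measure $\frac{dv\,ds}{s^3}$. Corollary \ref{2d_Reproducing_Systems_D_R_D_T_q} states exactly that this is a reproducing system in $L^2(\R_+\times\R)$, which closes the chain.

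The argument for ${\mathcal J}=III_\alpha$ is identical, with $\psi^{D_\R}$ replaced by $\psi^{D_\T}$: Theorem \ref{I-IV_equivalence} (ii) reduces reproduction of $\bigl\{c_f^{-1}\mu^{III_\alpha}_{(u,t)}\psi^{III_\alpha,D_\T}\bigr\}$ to reproduction of $\bigl\{c_f^{-1}\mu^{(q)}_{(v,s)}U\psi^{D_\T}\bigr\}$ in $L^2(\R_+\times\T)$, and the latter is guaranteed by the second half of Corollary \ref{2d_Reproducing_Systems_D_R_D_T_q}. I do not expect a real obstacle here: the only thing to double-check is that Theorem \ref{I-IV_equivalence} (ii) is invoked with the right pairing of measures (left Haar on $\R^2$ against $\frac{dv\,ds}{s^3}$ on $\R\times\R_+$), which is precisely how it is stated, and that the factor $c_f^{-1}$ matches the normalization appearing in Corollary \ref{2d_Reproducing_Systems_D_R_D_T_q}. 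Both checks are bookkeeping, so the whole corollary follows as a one-line consequence once the intertwining identities are in place.
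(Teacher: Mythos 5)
Your proposal is correct and follows exactly the paper's own route: the paper proves this corollary by citing Theorem \ref{I-IV_equivalence} (ii) together with Corollary \ref{2d_Reproducing_Systems_D_R_D_T_q}, which is precisely the chain you spell out. The additional bookkeeping you supply (the cancellation $U^{\mathcal J}(U^{\mathcal J})^{-1}U\psi^{D_\R}=U\psi^{D_\R}$ and the quadratic scaling of the constant $c_f^{-1}$) is accurate and fills in the details the paper leaves implicit.
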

\begin{proof} The result is a direct consequence of  Theorem \ref{I-IV_equivalence} (ii) and Corollary \ref{2d_Reproducing_Systems_D_R_D_T_q}.
\end{proof}

\begin{cor}\label{2d_Orthonormal_Basis_D_R_D_T_J}
 Systems $\left\{\mu^{\mathcal J}_{\lambda}\psi^{{\mathcal J}, D_\R}\right \}_{\lambda\in \Lambda^{\mathcal J}}$, 
$\,{\mathcal J}=I_\alpha,II,IV_\alpha$, $\left\{\mu^{\mathcal J}_{\lambda}\psi^{{\mathcal J}, D_\T} \right \}_{\lambda\in \Lambda^{\mathcal J}}$,  \newline $\,{\mathcal J}=III_\alpha$, 
with the Hilbert spaces the same as in Corollary \ref{Reproducing_Discrete_System_iff_1d_Isometry_all_cases}, are orthonormal bases.
\end{cor}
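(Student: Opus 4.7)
The plan is to reduce the statement to the $\mu^{(q)}$ case handled by Corollary \ref{2d_Orthonormal_Bases_D_R_D_T_q}, using the intertwining operators $U^{\mathcal J}$ built in Theorem \ref{I-IV_equivalence}. By the defining identities $\psi^{{\mathcal J},D_\R}=(U^{\mathcal J})^{-1}U\psi^{D_\R}$ and $\psi^{{\mathcal J},D_\T}=(U^{\mathcal J})^{-1}U\psi^{D_\T}$, applying $U^{\mathcal J}$ to these generators returns $U\psi^{D_\R}$ and $U\psi^{D_\T}$, which are precisely the generators appearing in Corollary \ref{2d_Orthonormal_Bases_D_R_D_T_q}. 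Therefore Theorem \ref{I-IV_equivalence} (iii) applies with $\psi=\psi^{{\mathcal J},D_\R}$ (resp.\ $\psi^{{\mathcal J},D_\T}$) and shows that the discrete system $\{\mu^{\mathcal J}_\lambda\psi^{{\mathcal J},D_\R}\}_{\lambda\in\Lambda^{\mathcal J}}$ is reproducing if and only if $\{\mu^{(q)}_\lambda U\psi^{D_\R}\}_{\lambda\in\Lambda^{(q)}}$ is reproducing, and likewise for $D_\T$; by Corollary \ref{2d_Orthonormal_Bases_D_R_D_T_q} these are indeed orthonormal bases, so the systems in question are at least Parseval frames.

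To upgrade "Parseval frame" to "orthonormal basis" I would invoke the standard observation that a Parseval frame whose elements all have norm one is automatically orthonormal. Unit norm of each element is immediate: $\mu^{\mathcal J}_\lambda$ acts by unitary operators, and $U^{\mathcal J}$, $U$ are unitary (Theorem \ref{I-IV_equivalence} (i), Proposition \ref{l_q_equivalence} (i)), so
$$\|\mu^{\mathcal J}_\lambda\psi^{{\mathcal J},D_\R}\|=\|\psi^{{\mathcal J},D_\R}\|=\|U\psi^{D_\R}\|=\|\psi^{D_\R}\|=1$$
by Lemma \ref{Basic_psi_^D_R_properties} (iii), and analogously in the $D_\T$ case via Lemma \ref{Basic_psi_^D_T_properties} (iii). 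Setting $f=\mu^{\mathcal J}_{\lambda_0}\psi^{{\mathcal J},D_\R}$ in the Parseval identity then yields $1=1+\sum_{\lambda\ne\lambda_0}|\langle \mu^{\mathcal J}_{\lambda_0}\psi^{{\mathcal J},D_\R},\mu^{\mathcal J}_{\lambda}\psi^{{\mathcal J},D_\R}\rangle|^2$, forcing all off-diagonal inner products to vanish, and completeness is given by the Parseval identity itself.

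There is essentially no obstacle: everything is a one-line consequence of previously established results, once one notes the cancellation $U^{\mathcal J}(U^{\mathcal J})^{-1}U\psi^{D_\bullet}=U\psi^{D_\bullet}$ that makes the pullback of the $\mu^{(q)}$-generator by $U^{\mathcal J}$ exactly the generator we named $\psi^{{\mathcal J},D_\bullet}$. The only subtlety worth stating explicitly is the passage from Parseval frame to orthonormal basis via unit-norm generators, which is why verifying $\|\psi^{{\mathcal J},D_\bullet}\|=1$ through the chain of unitaries is the single point deserving explicit mention in the write-up.
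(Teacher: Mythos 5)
Your proposal is correct and follows essentially the same route as the paper, whose proof is simply the citation of Theorem \ref{I-IV_equivalence} (iii) together with Corollary \ref{2d_Orthonormal_Bases_D_R_D_T_q}. Your explicit upgrade from Parseval frame to orthonormal basis via unit-norm elements correctly fills in the detail the paper leaves implicit; alternatively one can observe that the intertwining relation exhibits the system $\left\{\mu^{\mathcal J}_{\lambda}\psi^{{\mathcal J},D_\R}\right\}_{\lambda\in\Lambda^{\mathcal J}}$ as the image under the unitary $\left(U^{\mathcal J}\right)^{-1}$ of the orthonormal basis $\left\{\mu^{(q)}_{\lambda}U\psi^{D_\R}\right\}_{\lambda\in\Lambda^{(q)}}$, which gives the conclusion at once.
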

\begin{proof}  The result is a direct consequence of  Theorem \ref{I-IV_equivalence} (iii) and Corollary \ref{2d_Orthonormal_Bases_D_R_D_T_q}
\end{proof}

\begin{center}
\text{A\scriptsize{CKNOWLEDGEMENTS}}
\end{center}

The authors would like to thank Hartmut F\"uhr for many pertinent comments on an early version of the current paper.

Margit Pap  was supported by the European Union, co-financed by the European Social Fund. EFOP-3.6.1.-16-2016-00004.


\begin{thebibliography}{Comment}

\bibitem[1]{AAG}S.T. Ali, J.-P. Antoine, J.-P Gazeau, {\em Coherent States, Wavelets, and Their Generalizations}, Second Edition, Theoretical and Mathematical Physics, Springer, New York, 2014.

\bibitem[2]{DM&Co2}G.S. Alberti, L. Balletti, F. De Mari, E. De Vito, {\em Reproducing Subgroups of $Sp(2,\R)$, Part I: Algebraic Classification}, J. Fourier Anal. Appl., {\bf 19}(4) (2013), 651--682.

\bibitem[3]{DM&Co3}G.S. Alberti, F. De Mari, E. De Vito, L. Mantovani, {\em Reproducing Subgroups of $Sp(2,\R)$, Part II: Admissible Vectors}, Monatsh. Math., {\bf 173}(3) (2014), 261--307.

\bibitem[4]{Aro}N. Aronszajn,{\em  Theory of Reproducing Kernels}, Trans. Am. Math. Soc., {\bf 68} (3) (1950), 337--404

\bibitem[5]{CDMNT1}E. Cordero, F. De Mari, K. Nowak, and A. Tabacco, {\em Analytic Features of Reproducing Groups for the Metaplectic Representation}, J. Fourier Anal. Appl. 12 (2006), , 157–-179.

\bibitem[6]{CDMNT2}E. Cordero, F. DeMari, K. Nowak, and A. Tabacco, {\em Reproducing Subgroups for the Metaplectic Representation}, Operator Theory: Advances and Applications 164 (2006), 227--244.

\bibitem[7]{CDMNT3}E. Cordero, F. De Mari, K. Nowak, and A. Tabacco, {\em Dimensional Upper Bounds for Admissible Subgroups of the Metaplectic Representation}, Math. Nachr. 283 (2010), 982–-993.

\bibitem[8]{CoTa} E. Cordero, A. Tabacco, {\em Triangular Subgroups of $Sp(d,\R)$ and Reproducing Formulae}, J. Func. Anal., {\bf 264}(9) (2013), 2034--2058.

\bibitem[9]{Dau}I. Daubechies, {\em Ten Lectures on Wavelets},
CBMS-NSF Regional Conference Series, no. 6, SIAM, Philadelphia, 1992.

\bibitem[10]{DeGo}M. De Gosson, {\em Symplectic Geometry and Quantum Mechanics}, Operator Theory, Advances and Applications, Vol. 166, Birkh\"auser, Basel, 2006. 

\bibitem[11]{DM&Co1}F. De Mari and E. De Vito, {\em Admissible Vectors for Mock Metaplectic Representations}, Appl. Comput. Harmon. Anal. 34 (2013), 163–-200.

\bibitem[12]{DM&Co4} F. De Mari, E. De Vito, {\em The Use of Representations in Applied Harmonic Analysis}, S. Dahlke et al. (eds.), Harmonic and Applied Analysis, Applied and Numerical Harmonic Analysis 68, Springer, New York, 2015.

\bibitem[13]{DMNo1}F. De Mari, K. Nowak, {\em Analysis of the Affine 
Transformations of the Time-Frequency Plane}, Bull. Austral. Math. Soc., {\bf 63}(2) (2001), 195--218.

\bibitem[14]{DMNo2}F. De Mari, K. Nowak, {\em Canonical Subgroups of ${\mathbb H}_1\rtimes SL(2,\R)$}, Bol. U.M.I. Sez. B, {\bf 5}(8) (2002), 405--430.

\bibitem[15]{Fol1}G. Folland, {\em Harmonic Analysis in Phase Space},
Princeton University Pres, Princeton, 1989.

\bibitem[16]{Fol2}G. Folland, {\em A Course in Abstract Harmonic Analysis},
 Stud. Adv. Math., CRC Press, Boca Raton, FL, 1995.

\bibitem[17]{Fuh}H. F\"uhr, {\em Abstract Harmonic Analysis of Continuous Wavelet Transforms}, Lecture Notes in Mathematics, Vol. 1863, Springer, New York, 2005. 

\bibitem[18]{Gro}K. Gr\"ochenig, {\em Foundations of Time-Frequency Analysis}, Birkh\"auser, Boston, 2001.

\bibitem[19]{Loj}S. {\L}ojasiewicz, {\em An introduction to the Theory of Real Functions}, Wiley, New York, 1988.

\bibitem[20]{NoPa}K. Nowak, M. Pap, {\em Two-dimensional Shannon type expansions via one-dimensional affine and wavelet lattice actions}, arXiv, https://arxiv.org/pdf/1611.05779.pdf, to appear in Colloquium Mathematicum.

\bibitem[21]{Rud}W. Rudin, {\em Real and Complex Analysis}, Third Edition, McGraw-Hill, New York, 1987.

\bibitem[22]{Woj}P. Wojtaszczyk, {\em A Mathematical Introduction to Wavelets},
Cambridge University Press, Cambridge, 1997. 

\end{thebibliography}
\end{document}